\newcommand{\cP}{\ensuremath{\mathcal P}}
\newcommand{\NN}{\ensuremath{\mathbbm N}}
\renewcommand{\Pr}{\ensuremath{\mathbb P}}
\newcommand{\eps}{\varepsilon}
\renewcommand{\phi}{\varphi}
\DeclareMathOperator*{\E}{\mathbb{E}}
\DeclareMathOperator*{\N}{\mathbb{N}}
\renewcommand{\le}{\leqslant}
\renewcommand{\ge}{\geqslant}
\definecolor{royalazure}{rgb}{0.0, 0.22, 0.66}
\newtheorem{theorem}{Theorem}
\newtheorem{lemma}[theorem]{Lemma}
\newtheorem*{lemma*}{Lemma}
\newtheorem{claim}[theorem]{Claim}
\newtheorem{conjecture}[theorem]{Conjecture}
\newtheorem*{problem*}{Problem}
\theoremstyle{definition}
\theoremstyle{remark}
\newtheorem{remark}[theorem]{Remark}
\numberwithin{theorem}{section}
\numberwithin{lemma}{section}
\numberwithin{definition}{section}
\numberwithin{proposition}{section}
\numberwithin{conjecture}{section}
\numberwithin{remark}{section}
\numberwithin{claim}{section}
\numberwithin{question}{section}
\newcommand{\sm}{\setminus}
\newcommand{\se}{\subseteq}
\title{An approximate version of Jackson's conjecture}
\author{
  Anita Liebenau \thanks{School of Mathematics and Statistics, UNSW Sydney, NSW 2052, Australia. Email:~\texttt{a.liebenau@unsw.edu.au.} Supported by the Australian research council (ARC), DE170100789 and DP180103684.}		 
  \and
  Yanitsa Pehova\thanks{Mathematics Institute, University of Warwick, Coventry CV4 7AL, UK. Email: \texttt{y.pehova@warwick.ac.uk.} 
  This work has received funding from the European Research Council (ERC) under the European Union’s Horizon 2020 research and innovation programme (grant agreement No 648509). This publication reflects only its authors' view; the European Research Council Executive Agency is not responsible for any use that may be made of the information it contains.
  }
}
\date{}
\begin{document}

\maketitle

\begin{abstract}
A diregular bipartite tournament is a balanced complete bipartite graph whose edges are oriented so that every vertex has the same in- and outdegree. 
In 1981 Jackson showed that a diregular bipartite tournament contains a Hamilton cycle, and conjectured that in fact the edge set of it can be partitioned into Hamilton cycles. We prove an approximate version of this conjecture: for every $\eps>0$ there exists $n_0$ such that every diregular bipartite tournament on $2n\ge n_0$ vertices contains a collection of $(1/2-\eps)n$ cycles of length at least 
$(2-\eps)n$. Increasing the degree by a small proportion allows us to prove the existence of many Hamilton cycles: for every $c>1/2$ and $\eps>0$ there exists $n_0$ such that every $cn$-regular bipartite digraph on $2n\geq n_0$ vertices contains $(1-\eps)cn$ edge-disjoint Hamilton cycles. 
\end{abstract}

\section{Introduction}\label{sec:intro}

Finding sufficient conditions for a graph to contain a Hamilton cycle, i.e.~a cycle that contains every vertex of $G$, is one of the classical problems in graph theory. Dirac's theorem~\cite{dirac} states that every graph on $n$ vertices with minimum degree at least $n/2$ contains a Hamilton cycle. Later, Ore~\cite{ore} showed that it is enough if every pair of non-adjacent vertices has the sum of their degrees totaling at least $n$. A natural extension to the existence of one Hamilton cycle is then the existence of many edge-disjoint Hamilton cycles, or even of a decomposition into Hamilton cycles, i.e.~a partition of the edges of a graph into Hamilton cycles. Clearly, if such a decomposition exists, say into $d$ Hamilton cycles, then the graph must be $2d$-regular. A construction by Walecki (see, e.g., \cite{alspach,HILTON1984125}) shows that the complete graph $K_{2d+1}$ admits such a decomposition for every $d\ge  1$. More generally, the complete $r$-partite graph $K(n;r)$ on $rn$ vertices admits a decomposition into Hamilton cycles whenever $(r-1)n$ is even; and into Hamilton cycles and a perfect matching if $(r-1)n$ is odd~\cite{hetyei, la1976}. Some further graph classes have been shown to admit Hamilton decompositions, we refer the reader to the survey article by Alspach, Bermond and Sotteau~\cite{Alspach1990}. 

Nash-Williams \cite{nash-williams71} extended Dirac's theorem by showing that every $n$-vertex graph with minimum degree at least $n/2$ contains at least $5n/224$ edge-disjoint Hamilton cycles, and conjectured that the minimum degree condition is sufficient to prove the existence of $\lfloor \frac{n+1}{4}\rfloor$ edge-disjoint Hamilton cycles. Babai (see \cite{nash-williams70}) provided a construction showing that this is false. However, Csaba, K\"uhn, Lo, Osthus and Treglown \cite{csaba} proved that regular graphs satisfying the above minimum degree condition can be decomposed into Hamilton cycles and at most one perfect matching. 

These problems naturally extend to the setting of {\em oriented graphs} that are obtained from simple graphs by endowing every edge with an orientation. 
We write $(u,v)$ for the (oriented) edge between the pair $\{u,v\}$ oriented from $u$ to $v$. A Hamilton cycle in an oriented graph $G$ is an ordering  $v_1,\ldots,v_n$ of the vertices of $G$ such that for all $1\le i \le n$ the edge $(v_i,v_{i+1})$ is present (where $v_{n+1} = v_1$). The {\em outdegree} of a vertex $v$ in an oriented graph $G$, denoted by $d_G^+(v)$, is the number of edges $(v,y)\in E(G)$, and the {\em indegree} of a vertex $v$ in an oriented graph $G$, denoted by $d_G^-(v)$, is the number of edges $(x,v)\in E(G)$. We suppress the subscript $G$ if the graph $G$ is clear from context. We set $\delta^{+}(G)=\min_{v\in V(G)} d^+(v)$, $\delta^{-}(G)=\min_{v\in V(G)} d^-(v)$, and 
$\delta^{0}(G)=\min\{\delta^{+}(G), \delta^{-}(G)\}$. We refer to the latter one as the {\em minimum semidegree} of $G$ (the \emph{maximum semidegree} $\Delta^0(G)$ is defined analogously). 

Keevash, K\"{u}hn and Osthus~\cite{keevash} show that for $n$ large enough, every oriented graph $G$ on $n$ vertices with minimum semidegree at least $\frac{3n-4}{8}$ contains a Hamilton cycle. A construction due to H\"{a}ggkvist~\cite{haggkvist_1993} shows that this is best possible. K\"{u}hn and Osthus~\cite{KUHN201362} prove that every $r$-regular oriented graph $G$ on $n$ vertices has a Hamilton cycle decomposition for every $r\geq cn$, where $c>3/8$ is a constant and $n$ is large enough. In particular, this establishes Kelly's conjecture which states that every regular tournament has a Hamilton cycle decomposition. The result in~\cite{KUHN201362} builds on earlier work by K\"{u}hn, Osthus and Treglown~\cite{kot2010} which includes a first approximate version of Kelly's conjecture. 

How many disjoint Hamilton cycles can one guarantee when the (oriented) graph is not regular? As the union of disjoint Hamilton cycles forms a regular spanning subgraph, the maximal $r$ for which $G$ contains an $r$-regular spanning subgraph is an upper bound for this quantity. 
%In general, an oriented graph on $n$ vertices with minimum semidegree $r\ge cn$ for some $c>3/8$ does not necessarily contain an $r$-regular subgraph. 
Ferber, Long and Sudakov \cite{ferber2018} show that this upper bound is asymptotically correct for oriented graphs of large enough minimum semidegree. 

\begin{theorem}[Ferber, Long, Sudakov \cite{ferber2018}]\label{thm:fls}
Let $c>3/8$, $\eps>0$ and let $n$ be sufficiently large. Let $G$ be an oriented graph on $n$ vertices with $\delta^0(G)\geq cn$. Then $G$ contains $(1-\eps)r$ edge-disjoint Hamilton cycles, where $r$ is the maximum integer such that $G$ contains an $r$-regular spanning subgraph.
\end{theorem}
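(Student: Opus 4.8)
The plan is to reduce the problem to the setting of (nearly-)regular robust outexpanders, where the Hamilton-decomposition machinery of K\"uhn and Osthus applies. Fix $\eps<c-3/8$ and let $H\se G$ be an $r$-regular spanning subdigraph with $r$ maximal. Since any $k$ pairwise edge-disjoint Hamilton cycles form a $k$-regular spanning subdigraph, $r$ is precisely the largest number of edge-disjoint Hamilton cycles $G$ could possibly contain, so we aim within a $(1-\eps)$-factor of the trivial upper bound. The one structural input we need from the hypothesis $c>3/8$ is the known fact that every digraph on $n$ vertices with minimum semidegree at least $(3/8+\gamma)n$ is a robust $(\nu,\tau)$-outexpander for suitable $\nu=\nu(\gamma)>0$ and $\tau=\tau(\gamma)>0$; in particular $G$, and $G$ with any spanning subdigraph of maximum semidegree $o(n)$ removed, is a robust outexpander.

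\emph{Two reductions.} First, if $r\ge(3/8+\eps/4)n$ then $H$ is an $r$-regular oriented graph with $r\ge(3/8+\eps/4)n$, so the theorem of K\"uhn and Osthus~\cite{KUHN201362} gives a decomposition of $H$ into $r\ge(1-\eps)r$ edge-disjoint Hamilton cycles and we are done; so assume $r<(3/8+\eps/4)n$. Second, I would show $r\ge\eta n$ for a constant $\eta=\eta(c)>0$. By the max-flow/min-cut characterisation, $G$ contains a $k$-regular spanning subdigraph if and only if the number $e_G(A,V\sm B)$ of edges from $A$ into $V\sm B$ is at least $k(|A|-|B|)$ for all $A,B\se V(G)$; it therefore suffices to show $e_G(A,V\sm B)\ge\eta' n\,(|A|-|B|)$ whenever $|A|>|B|$, for a constant $\eta'=\eta'(c)>0$. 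For $\tau n\le|B|\le(1-\tau)n$ this follows from robust outexpansion with $S:=V\sm B$: a vertex of $A$ with at least $\nu n$ out-neighbours in $S$ contributes $\ge\nu n$ to $e_G(A,V\sm B)$, and the robust outneighbourhood of $S$, of size $\ge|S|+\nu n=n-|B|+\nu n$, limits how many vertices of $A$ fail this; for $|B|<\tau n$ or $|B|>(1-\tau)n$ a direct minimum-semidegree estimate gives it. Hence $\eta n\le r<(3/8+\eps/4)n$ from here on.

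\emph{The main case.} Now $H$ need not be a robust outexpander --- its semidegree may lie below the expansion threshold --- so the expansion has to be transported from the ambient $G$ onto a nearly-$r$-regular subdigraph by a random perturbation carried out inside $G$. Concretely, I would sparsify $G$ randomly (and, where $\delta^0(G\sm H)$ is large enough, augment with expanding edges of $G\sm H$) to obtain, with high probability, a spanning subdigraph that is a robust outexpander with minimum semidegree above $(1-\eps/3)r$; then, applying the max-flow characterisation once more --- its flow inequalities holding for all cuts by those of $G$ together with concentration --- trim this to an \emph{exactly} $D$-regular spanning subdigraph $\Gamma\se G$ with $D\ge(1-\eps/2)r$. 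Being a subdigraph of a robust outexpander of linear degree, $\Gamma$ is again a robust outexpander, so the approximate Hamilton decomposition theorem for regular robust outexpanders (which follows from the methods of~\cite{KUHN201362}), applied to $\Gamma$, yields at least $(1-\eps/2)D\ge(1-\eps/2)^2r\ge(1-\eps)r$ edge-disjoint Hamilton cycles, all inside $\Gamma\se G$.

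\emph{The main obstacle.} The first reduction and the appeal to the approximate-decomposition black box are routine; the heart of the argument is the transport step --- producing, inside the irregular graph $G$, a spanning subdigraph that is at once \emph{exactly} regular and of degree at least $(1-\eps)r$, sacrificing only an $\eps$-fraction of the degree. The tension is that $H$ has the density but no expansion, while $G\sm H$ can be too sparse to supply expansion on its own (most acutely when $\delta^0(G)$ and $c$ are both close to $3/8$), which forces the perturbation to take place within $G$; and the delicate part is that after random sparsification one must still be able to trim to an exactly regular spanning subdigraph of almost the extremal degree --- equivalently, the inequality $e(A,V\sm B)\ge D(|A|-|B|)$ has to survive sparsification simultaneously over all of the exponentially many pairs $(A,B)$, and controlling the rare ``tight'' pairs is the crux. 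One alternative avoiding the approximate step is to reserve from $G$ a sparse ``robustly decomposable'' subdigraph, reroute $H$ to be edge-disjoint from it --- dropping an $\eps$-fraction of its one-factors to make the rerouting feasible --- and then invoke the full robust decomposition lemma of K\"uhn and Osthus for an exact Hamilton decomposition of the reserved subdigraph together with the rerouted $H$; this trades the approximate-decomposition step for the rerouting and that lemma's more elaborate hypotheses.
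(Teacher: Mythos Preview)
This theorem is not proved in the present paper: it is quoted verbatim as a result of Ferber, Long and Sudakov~\cite{ferber2018} and serves only as background motivation for the bipartite analogue (Theorem~\ref{thm:main}). There is therefore no ``paper's own proof'' to compare your proposal against.

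For what it is worth, your sketch is also quite different in spirit from what Ferber, Long and Sudakov actually do in~\cite{ferber2018}. They do \emph{not} go through the robust-outexpander decomposition machinery of K\"uhn and Osthus; instead, they argue directly and rather more elementarily --- very much along the lines of the proof of Theorem~\ref{thm:main} in the present paper, which is explicitly an adaptation of their method. Roughly: split the vertex set into many pieces, find large families of near-perfect matchings between consecutive pieces (cf.~Lemma~\ref{lem:lem23} here), concatenate these into many edge-disjoint path covers with few paths (cf.~Lemma~\ref{lem:pathcover}), and finally close each path cover into a Hamilton cycle using a small reserved set of vertices (cf.~Lemma~\ref{lem:pathstohamcycle}). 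Your robust-outexpander route is plausible in outline, but the step you yourself flag as the crux --- producing an \emph{exactly} $D$-regular spanning subdigraph $\Gamma$ with $D\ge(1-\eps)r$ that is still a robust outexpander --- is genuinely delicate. In particular, the sentence ``Being a subdigraph of a robust outexpander of linear degree, $\Gamma$ is again a robust outexpander'' is false as stated: arbitrary spanning subdigraphs of robust outexpanders need not expand at all, so the expansion must be built into $\Gamma$ during its construction, not inherited afterwards.
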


In this paper, we consider the corresponding degree conditions for regular bipartite oriented graphs. An obvious necessary condition for a bipartite (oriented) graph to contain a Hamilton cycle is that both parts of the bipartition have equal size, in which case the graph is called \emph{balanced}. Note that the minimum  
semidegree of a bipartite oriented graph $G$ can be at most $\lfloor v(G)/4\rfloor$, where $v(G)$ denotes the number of vertices of $G$. Graphs that attain this bound satisfy $4|v(G)$, and are necessarily balanced and $(v(G)/4)$-regular. Such graphs are called \emph{diregular bipartite tournaments}. Jackson~\cite{jackson} showed that  
diregular bipartite tournaments are Hamiltonian, and he conjectured the following. 

\begin{conjecture}[Jackson \cite{jackson}]\label{conj:jackson}
Every diregular bipartite tournament is decomposable into Hamilton cycles.
\end{conjecture}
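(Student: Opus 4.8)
The statement is Jackson's conjecture itself, which is still open; what follows is a plan for how one might attack it on top of the approximate decomposition established here. The natural route is the \emph{approximate decomposition plus absorption} framework that underlies modern exact Hamilton decomposition results, such as the proof of Kelly's conjecture~\cite{KUHN201362}. Let $G$ be a diregular bipartite tournament with parts $A,B$ of size $n$, so $G$ is $(n/2)$-regular. The first step is to push the approximate packing to the boundary density $c=1/2$ while exploiting the extra structure of $G$ --- namely that for every $a\in A$ and $b\in B$ exactly one of $ab,ba$ is an edge --- so as to obtain $(1-\eps)(n/2)$ edge-disjoint Hamilton cycles whose deletion leaves an $o(n)$-regular bipartite digraph $H$. (The approximate result in this paper and the methods behind Theorem~\ref{thm:fls} are the template for this.) The problem then reduces to decomposing $H$, or rather to decomposing the union of $H$ with a sparse collection of Hamilton cycles withheld at the outset, into Hamilton cycles.

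For this exact step I would appeal to the bipartite version of the K\"uhn--Osthus robust-outexpander decomposition theorem: a regular bipartite digraph that is a robust outexpander has a decomposition into Hamilton cycles. It would therefore suffice to arrange that the leftover digraph --- after absorbing the withheld cycles --- is a regular robust outexpander; the cleanest way to control this is to decide up front whether the input $G$ itself robustly expands and to reserve the withheld cycles accordingly.

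The crux of the difficulty is that a diregular bipartite tournament need \emph{not} be a robust outexpander. For instance, split $A=A_1\cupdot A_2$ and $B=B_1\cupdot B_2$ into sets of size $n/2$ and orient $A_1\to B_1$, $B_1\to A_2$, $A_2\to B_2$, $B_2\to A_1$ completely; this is a perfectly valid $(n/2)$-regular bipartite tournament (a blow-up of a directed $4$-cycle), yet the robust out-neighbourhood of $A_1$ is exactly $B_1$, so no expansion is gained. As in the proof of Kelly's conjecture, one is thus forced into a structural dichotomy: every diregular bipartite tournament is \emph{either} a robust outexpander --- in which case the machinery above finishes the job --- \emph{or} close, in edit distance, to the blow-up of a short directed even cycle, the genuinely delicate cases being the directed $2$-cycle (where $V(G)$ splits into two diregular bipartite tournaments joined by a near-complete, cyclically oriented bipartite digraph) and the $4$-cycle above. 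In each extremal case one must build a Hamilton decomposition essentially by hand, recursively decomposing the pieces and threading the cross-edges through them, and also absorb the $o(n)$-regular remainder in a way that respects the (near-)extremal structure. Carrying out this extremal analysis in full, together with a parity bookkeeping so that one genuinely obtains a decomposition into Hamilton cycles --- and not merely a decomposition into Hamilton cycles plus a perfect matching --- according to the parity of $n$, is where essentially all the work lies, and is why the conjecture remains unresolved.
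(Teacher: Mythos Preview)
The paper does not prove this statement: Conjecture~\ref{conj:jackson} is recorded precisely as an open conjecture, and the paper's contribution is the approximate version in Theorem~\ref{thm:main} (for $c>1/2$) together with the intermediate Conjectures~\ref{conj:jackson-approx} and~\ref{conj:jackson-approx2} in Section~\ref{sec:outro}. You correctly recognise this and present not a proof but a strategic outline, so there is no ``paper proof'' to compare against.

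As a plan, your outline is sensible and in the spirit of the robust-outexpander machinery of~\cite{KUHN201362}; the paper itself points out in Section~\ref{sec:outro} that this approach does not cover the bipartite case, and your blow-up-of-$C_4$ example is exactly the obstruction. Two small remarks. First, the step ``push the approximate packing to the boundary density $c=1/2$'' is already nontrivial: Theorem~\ref{thm:main} genuinely needs $c>1/2$ (this is used, for instance, to guarantee $\delta^0(F_k[W_i])>|W_i|/2$ so that Lemma~\ref{lem:pathstohamcycle} applies), and at $c=1/2$ the $C_4$-blow-up shows one cannot hope for the same expansion-based connection arguments without first handling the extremal structure --- so the ``extremal versus non-extremal'' split you describe has to happen \emph{before} the approximate decomposition, not after. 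Second, your closing comment about parity is off: a diregular bipartite tournament has $4\mid v(G)$, so with $v(G)=2n$ the degree is $n/2$ with $n$ even, and the edge count $n^2$ is an integer multiple of the Hamilton-cycle length $2n$; there is no leftover perfect matching to worry about.
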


In this paper we adjust the methods of \cite{ferber2018} to the bipartite setting and prove the following relaxation of Jackson's conjecture. A {\em directed graph} (or \emph{digraph}, for short) consists of a set of vertices $V$ and a set of ordered pairs of $V$, called directed edges (or just edges). That is, directed graphs may contain edges $(x,y)$ and $(y,x)$ for two vertices $x,y\in V$, but no loops and no multiple edges. The notions of Hamilton cycles, minimum semidegree, etc., introduced earlier for oriented graphs generalise in the natural way to directed graphs. 

\begin{theorem}\label{thm:main}
Let $c>1/2$, $\eps>0$, and let $n$ be sufficiently large. Then every $cn$-regular  bipartite digraph $G$ on $2n$ vertices contains at least $(1-\eps)cn$ edge-disjoint Hamilton cycles. 
\end{theorem}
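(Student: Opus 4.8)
The plan is to follow the approach of Ferber, Long and Sudakov for Theorem~\ref{thm:fls}, adapting every step to the bipartite setting. Write $V(G) = A \cupdot B$ with $|A| = |B| = n$, and set $d = cn$. The overall strategy is to (i) pull out a small ``random-like'' subgraph $R$ of $G$ reserving it for a later absorbing/patching stage, (ii) decompose the bulk of $G$ into a small number of approximately regular, near-perfect $1$-factors (i.e.\ sub-digraphs in which almost every vertex has in- and out-degree exactly~$1$, so each is close to a union of a few long directed cycles covering almost all vertices), and (iii) use the reserved robustly-expanding sparse remainder to splice the few long cycles of each such factor into a single Hamilton cycle, losing only an $\eps$-fraction of the factors to this repair process.

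The concrete steps I would carry out are as follows. \emph{Step 1 (Regularity / random sparsening).} Apply a directed, bipartite regularity-type partition or a direct probabilistic argument: since $G$ is $cn$-regular with $c > 1/2$, for every ordered pair of not-too-small vertex subsets $X \se A$, $Y \se B$ the number of edges from $X$ to $Y$ is $(c \pm o(1))|X||Y|$, because the $cn$-regularity forces each vertex of $X$ to send $cn$ edges into $B$, and $c>1/2$ makes $G$ a ``dense'' bipartite digraph with strong quasirandom-type edge distribution inherited from regularity. In particular $G$ is a \emph{robust outexpander} in the bipartite sense: for any $S$ with $\alpha n \le |S| \le (1-\alpha)n$, the robust out-neighbourhood of $S$ is substantially larger than $S$. \emph{Step 2 (Reserving the patching graph).} Choose a random sparse spanning sub-digraph $R \se G$ by keeping each edge independently with probability $\gamma$ for a suitable $\gamma = \gamma(\eps)$; by Chernoff bounds $R$ is $(\gamma c n)(1\pm o(1))$-regular and still a robust bipartite outexpander, and $G' := G \sm R$ is $((1-\gamma)cn)(1\pm o(1))$-regular. \emph{Step 3 (Approximate $1$-factorization of $G'$).} Using the bipartite von Neumann / Birkhoff–von Neumann heuristic made rigorous by an iterative application of the max-flow / Hall-type argument (or the directed bipartite analogue of the near-$1$-factorization lemma in \cite{ferber2018}), decompose almost all edges of $G'$ into $N = (1-\gamma)cn(1\pm o(1))$ sub-digraphs $F_1, \dots, F_N$, each of which has $\delta^0, \Delta^0$ within $o(n)$ of~$1$; equivalently each $F_i$ is an almost-spanning union of $o(n)$ vertex-disjoint directed cycles, possibly with a few leftover short paths. \emph{Step 4 (Turning factors into Hamilton cycles).} For all but $\eps N$ of the indices $i$, use a small, fresh portion of $R$ (splitting the edge budget of $R$ evenly across the factors, so each gets $\Omega(\eps \gamma c n / N) = \Omega(\eps\gamma c)$-worth of expansion per vertex, which by the robust-expansion property is enough to route through any $o(n)$ prescribed connections) to absorb the uncovered vertices of $F_i$ and to concatenate its $O(o(n))$ cycles and paths into one spanning cycle; this is the standard ``rotation–extension inside a robust expander'' or the Hamilton-cycle-absorption argument in the directed bipartite setting. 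Output the resulting $(1-\eps)cn$ edge-disjoint Hamilton cycles.

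The main obstacle, and the place where the bipartite adaptation needs the most care, is Step~3 together with the bipartite parity bookkeeping in Step~4: a directed bipartite $1$-factor must alternate $A,B,A,B,\dots$, so every cycle has even length and the ``leftover'' after an approximate $1$-factorization is not just a few short paths but a collection of paths whose endpoints are split between $A$ and $B$ in a way that must balance out globally; the patching graph $R$ must be used to connect these with the correct parity of endpoints, and one must verify that $R$, being bipartite, still has enough robust expansion in \emph{both} directions to realise every required connection. Establishing the quantitative robust-outexpansion of the bipartite digraph $G$ (and of $R$) from the $cn$-regularity with $c>1/2$ — in particular checking that $c>1/2$ rather than merely $c>3/8$ is exactly the threshold needed for expansion to kick in on one side of the bipartition — is the other key point; everything else is a fairly routine transcription of \cite{ferber2018}, with the Birkhoff–von Neumann-type decomposition and the Chernoff estimates going through essentially verbatim.
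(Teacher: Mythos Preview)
Your high-level plan --- reserve a piece for patching, nearly decompose the rest, then splice --- matches the paper's, but Step~1 contains a genuine error that undermines Steps~3 and~4. You assert that $cn$-regularity with $c>1/2$ forces quasirandom edge distribution (``the number of edges from $X$ to $Y$ is $(c\pm o(1))|X||Y|$'') and hence a bipartite form of robust outexpansion. This is false: degree regularity alone gives no control on edge counts between subsets. A circulant-type bipartite digraph (put $a_i\to b_j$ iff $j-i\in\{0,\dots,cn-1\}\bmod n$, and similarly for $B\to A$) is $cn$-regular for any $c$, yet for suitably chosen intervals $X\subseteq A$, $Y\subseteq B$ the density $e(X,Y)/|X||Y|$ can be $0$ or $1$. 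The paper in fact remarks in its concluding section that the robust-outexpander machinery of K\"uhn and Osthus does \emph{not} cover the bipartite case; this is precisely why a different argument is needed here. Without Step~1, your Step~3 (few cycles per approximate $1$-factor) and Step~4 (rotation--extension inside a robust expander) have no foundation.

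The paper sidesteps this as follows. Rather than reserving a sparse random \emph{edge} set $R$, it reserves small \emph{vertex} sets $W_i$ --- one for each of $(\log n)^3$ edge-disjoint spanning subgraphs $H_i$ into which $G$ is split --- chosen so that $H_i[W_i]$ still has minimum semidegree exceeding $|W_i|/2$. On the large complement $U_i$ it finds many edge-disjoint \emph{path covers}, each with at most $n/\log^4 n$ paths; these are produced not from expansion but by partitioning $U_i$ into about $\log^4 n$ blocks, laying a Hamilton-path template on the blocks (Lemma~\ref{lem:completebipartite}), and extracting large matchings between consecutive blocks via Lemma~\ref{lem:lem23}. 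To turn a path cover into a Hamilton cycle, the required $s_i$--$t_i$ connections are routed \emph{inside} the reserved $W_i$, where the semidegree condition allows a direct application of Ghouila--Houri's theorem (Claim~\ref{claim:HamPath}); no rotation--extension or expansion is invoked. In short, the hypothesis $c>1/2$ is used only \emph{locally}, to make each $H_i[W_i]$ dense enough for Ghouila--Houri, and not to obtain any global pseudorandomness of $G$.
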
 

To the best of our knowledge no other intermediate results towards Conjecture \ref{conj:jackson} are known. Our result constitutes an approximate version of Conjecture~\ref{conj:jackson} in the following sense. 
Let $G$ be a diregular bipartite tournament and add $\eps n$ in-  and out-neighbours to every vertex (this can be realised by adding an edge-disjoint $(\eps n)$-regular spanning subgraph using Hall's theorem on the complement). We obtain an almost decomposition of the resulting graph into Hamilton cycles. 
Of course, none of these Hamilton cycles need to be cycles of the original bipartite tournament. For bipartite oriented graphs we prove the following. 
\begin{theorem}\label{thm:main2}
Let $c>1/4$, $\eps>0$, and let $n$ be sufficiently large. Then every $cn$-regular bipartite oriented graph $G$ on $2n$ vertices contains at least $(1-\eps)cn$ edge-disjoint cycles of length at least $2n-O(n/\log^2 n).$ 
\end{theorem}
In particular, we can almost decompose the edge set of every bipartite regular tournament into almost spanning cycles. 

We note that the constants $1/2$ and $1/4$ in Theorems~\ref{thm:main} and~\ref{thm:main2} are optimal for such statements. Indeed, a $d$-regular digraph may be disconnected if $d=n/2$, as may be a $d$-regular oriented graph if $d=n/4.$

\section{Preliminaries}\label{sec:prelim}
In this section we introduce notation and present lemmas that we later use in the proof of our main result.

All graphs and digraphs are finite and simple, that is, they do not contain loops or double edges (in the case of graphs) or double oriented edges (in the case of digraphs). Let $G$ be a graph or a digraph. We denote by $V(G)$ the vertex set of $G$ and by $E(G)$ the edge set of $G$. For subsets $X,Y\se V(G)$ we write $E_G(X,Y)$ for the set of edges $xy$ if $G$ is a graph, and the set of directed edges $(x,y)$ if $G$ is a digraph. Let $G[X]$ denote the graph or digraph induced on $X$. When $G$ is a graph let $N_G(X)$ denote the set of vertices $y$ such that $xy\in E(G)$ for some $x\in X$. When $G$ is digraph, we denote by $N^-_G(X)$ ($N^+_G(X)$) the set of vertices $y$ such that $(y,x)\in E(G)$  ($(x,y)\in E(G))$ for some $x\in X$. 
When $X=\{v\}$ we also write $E_G(v,Y)$ (and $E_G(Y,v)$ in the digraph case), $N_G(v)$, $N^-_G(v)$, and $N^+_G(v)$ for the above sets, where the latter three we call the {\em neighbourhood}, the {\em in-neighbourhood}, and the {\em out-neighbourhood of $v$}, respectively. 
The sizes of these sets are denoted by $v(G) = |V(G)|$, $e(G)=|E(G)|$, $e(X,Y) = |E(X,Y)|$, $d_G(v)= |N_G(X)|$, $d^-_G(v)= |N^-_G(v)|$, $d^+_G(v)= |N^+_G(v)|$. We also write $d_G(v,Y)$ for $e_G(v,Y)$ when $G$ is a graph, and $d^+_G(v,Y)=e_G(v,Y)$ and $d^-_G(v,Y)= e_G(Y,v)$ when $G$ is a digraph. Throughout the paper, expressions of the form $d^\pm(v)\ge d$ are used as short-hand for "$d^-(v)\ge d$ and $d^+(v)\ge d$", and all other uses of $\pm$ carry the analogous meaning.  
We omit the subscript $G$ when there is no danger of ambiguity. 

We say a graph or digraph $G$ {\em has bipartition $(V_1,V_2)$} if $V(G)=V_1\cup V_2$ where $V_1$ and $V_2$ are disjoint and all edges have one endpoint in $V_1$ and one in $V_2$. A digraph $G$ is a {\em balanced bipartite} digraph if it has a bipartition $(V_1,V_2)$ such that $|V_1|=|V_2|$. 

For a graph or digraph with bipartition $(V_1,V_2)$ and a subset $W\se V(G)$ we write $W^{V_1}$ and $W^{V_2}$ for $W\cap V_1$ and $W\cap V_2$, respectively. 

For real numbers $x,y,z$ we write $x=y\pm z$ if $x\in [y-z,y+z]$. For two functions $f(n)$ and $g(n)$ we write $f(n)\ll g(n)$ if $f(n)/g(n)\to 0$ as $n\to\infty$. We omit floor and ceiling signs for clarity of presentation.

We need the following standard concentration result for binomial random variables (see \cite[Theorem A.1.1]{alonspencer}).

\begin{lemma}[Chernoff's inequality]\label{lem:chernoff}
Let $X$ be a binomial random variable with parameters $(n,p)$, and let $\mu = np$. Then 
\begin{align*}
    \Pr(|X-\mu|\ge a)\le 2e^{-a^2/3\mu}.
\end{align*} 
\end{lemma}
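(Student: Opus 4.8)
\textbf{Proof plan for Chernoff's inequality (Lemma~\ref{lem:chernoff}).}

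The plan is to use the standard exponential-moment (Bernstein) method and then combine the upper- and lower-tail bounds. Write $X = \sum_{i=1}^n X_i$ where the $X_i$ are independent Bernoulli$(p)$ random variables, and fix $\mu = np$. First I would treat the upper tail $\Pr(X \ge \mu + a)$. For any $t > 0$, Markov's inequality applied to $e^{tX}$ gives $\Pr(X \ge \mu + a) \le e^{-t(\mu+a)}\,\E[e^{tX}] = e^{-t(\mu+a)}\prod_{i=1}^n \E[e^{tX_i}] = e^{-t(\mu+a)}\bigl(1 + p(e^t-1)\bigr)^n$. Using $1 + x \le e^x$, this is at most $\exp\bigl(\mu(e^t-1) - t(\mu+a)\bigr)$. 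Optimising over $t$ by setting $e^t = 1 + a/\mu$ yields the bound $\exp\bigl(-\mu\,h(a/\mu)\bigr)$ where $h(x) = (1+x)\ln(1+x) - x$. The lower tail $\Pr(X \le \mu - a)$ is handled symmetrically, applying Markov to $e^{-tX}$ with $t > 0$ and optimising; one obtains $\exp\bigl(-\mu\,\tilde h(a/\mu)\bigr)$ with $\tilde h(x) = (1-x)\ln(1-x) + x$ (for $a \le \mu$; for $a > \mu$ the lower-tail probability is zero and the bound is trivial).

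The remaining step is purely analytic: I would show that both $h(x)$ and $\tilde h(x)$ are bounded below by $x^2/3$ on the relevant ranges, so that each one-sided probability is at most $e^{-a^2/(3\mu)}$; adding the two contributions gives the factor $2$ in the statement. For $\tilde h$ this is the easier direction --- a Taylor expansion of $(1-x)\ln(1-x)+x$ shows $\tilde h(x) \ge x^2/2 \ge x^2/3$ for $x \in [0,1]$. For $h(x) = (1+x)\ln(1+x) - x$ one checks that $g(x) := h(x) - x^2/3$ satisfies $g(0) = g'(0) = 0$ and $g''(x) = \tfrac{1}{1+x} - \tfrac{2}{3} \ge 0$ for $x \le 1/2$, and for $x > 1/2$ one verifies the inequality directly (or notes that the claimed bound can only be useful when $a \le \mu$, i.e.\ $x\le 1$, and checks $h(x)\ge x^2/3$ on $[1/2,1]$ by monotonicity of $h(x)/x^2$ or a crude estimate). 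This elementary casework is the only place requiring care, but it involves no subtlety beyond convexity estimates.

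I do not expect any genuine obstacle here; the lemma is classical and the proof is a textbook computation. The one point worth stating cleanly in the writeup is how the two one-sided tails combine: since $\{|X-\mu| \ge a\} = \{X \ge \mu+a\} \cup \{X \le \mu - a\}$ and each event has probability at most $e^{-a^2/(3\mu)}$, a union bound gives $\Pr(|X-\mu|\ge a) \le 2e^{-a^2/(3\mu)}$, as required. (Strictly, one could cite \cite{chernoff} and omit the proof entirely, which is presumably what the paper does; the sketch above is what a self-contained argument would look like.)
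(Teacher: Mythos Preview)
Your final parenthetical guess is correct: the paper does not prove Lemma~\ref{lem:chernoff} at all; it is stated with a citation and used as a black box. Your sketch is the standard exponential-moment derivation and is essentially sound, with the one caveat you already flag---namely that the inequality $h(x)\ge x^2/3$ genuinely fails for large $x$ (e.g.\ $h(2)=3\ln 3-2<4/3$), so the upper-tail bound $e^{-a^2/(3\mu)}$ as written is only valid in the regime $a\le \mu$; since every application in the paper lies well within this range, this is harmless.
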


\begin{remark}\label{rem:hypergeom} 
Let $X$ be a hypergeometric random variable with parameters 
$(N,K,n)$, that is, given an underlying set $V$ of size $N$ and a subset $S\se V$ of size $K$,  $X=|Y\cap S|$ where $Y$ is a subset of $V$ of size $n$ chosen uniformly at random. The same inequality as in Lemma~\ref{lem:chernoff} holds for $X$, where now $\mu=nK/N.$
For details see \cite[Section 21.5]{frieze_karonski_2015}).
\end{remark}

The following provides a sufficient minimum semidegree condition for a digraph to contain a Hamilton cycle. 
\begin{theorem}[Ghouila-Houri \cite{ghouilahouri}]\label{thm:ghouila-houri}
Every strongly connected digraph $G$ on $n$ vertices with $\delta^+(G)+\delta^-(G)\geq n$ contains a Hamilton cycle. In particular, if $\delta^0(G)\geq n/2$, then $G$ contains a Hamilton cycle.
\end{theorem}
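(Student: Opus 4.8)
The plan is to use the classical ``longest path'' argument. First, the ``in particular'' clause is immediate: if $\delta^0(G)\ge n/2$ then certainly $\delta^+(G)+\delta^-(G)\ge n$, and moreover $G$ is strongly connected, for otherwise we could write $V(G)=A\cupdot B$ with $A,B$ nonempty and no edge from $B$ to $A$, forcing $d^-(v)\le|A|-1$ for all $v\in A$ and $d^+(v)\le|B|-1$ for all $v\in B$, hence $|A|,|B|\ge n/2+1$ and $n=|A|+|B|\ge n+2$, a contradiction. So it suffices to prove the first assertion. Assume then that $G$ is strongly connected with $\delta^+(G)+\delta^-(G)\ge n$, and let $P=u_1u_2\cdots u_p$ be a path of maximum length in $G$. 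Maximality of $P$ gives $N^+(u_p)\se V(P)$ and $N^-(u_1)\se V(P)$.

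The crucial step is the claim that \emph{$G[V(P)]$ contains a cycle through all of $u_1,\dots,u_p$}. If $(u_p,u_1)\in E(G)$ this cycle is just $u_1u_2\cdots u_pu_1$, so assume otherwise. One then reassembles $V(P)$ into a single spanning cycle by a rerouting argument: using that every vertex of $P$ has in-degree plus out-degree at least $n\ge p$, a pigeonhole/crossing argument on the in-neighbours of $u_1$ and the out-neighbours of $u_p$ along $P$ locates a position where $P$ can be ``cut and rejoined'' so as to close up into a cycle on all of $V(P)$. Making this rerouting precise is, I expect, the main obstacle of the proof; the rest is comparatively routine.

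Granting the claim, let $C'$ be the resulting cycle, so that $|C'|=p$. I would finish by proving $p=n$. Suppose not, and pick $w\in V(G)\sm V(C')$. Since $G$ is strongly connected there is a directed path from some vertex of $C'$ to $w$; taking the last vertex of this path that lies in $V(C')$, we obtain an edge $(z,z')$ with $z\in V(C')$ and $z'\notin V(C')$. Writing $C'=z\,z_1z_2\cdots z_{p-1}\,z$, so that $z_1$ is the out-neighbour of $z$ on $C'$, the sequence $z_1z_2\cdots z_{p-1}\,z\,z'$ is a directed path on $p+1$ vertices, contradicting the maximality of $P$. Therefore $p=n$ and $C'$ is a Hamilton cycle of $G$, as required.

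As an alternative to the rerouting step one could run the same bootstrap from a longest \emph{cycle} $C$ instead of a longest path: maximality of $C$ forces, for every $w\notin V(C)$, that one cannot have both $(v_i,w)\in E(G)$ and $(w,v_{i+1})\in E(G)$ (else inserting $w$ into $C$ lengthens it), so the index sets $\{i:(v_i,w)\in E(G)\}$ and $\{i:(w,v_{i+1})\in E(G)\}$ are disjoint in $\ZZ_{|C|}$, whence $d^+(w,V(C))+d^-(w,V(C))\le|C|$ and $G-V(C)$ again satisfies the degree hypothesis. Closing the argument this way, however, requires threading a spanning path of $G-V(C)$ whose endpoints are dictated by strong connectivity of $G$, and the degree hypothesis alone does not make $G-V(C)$ Hamilton-connected, so this route has difficulties of its own.
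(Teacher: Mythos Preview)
First, a point of orientation: the paper does not prove Theorem~\ref{thm:ghouila-houri} at all. It is quoted in Section~\ref{sec:prelim} as a classical preliminary result of Ghouila-Houri and is only \emph{used} (inside Claim~\ref{claim:HamPath}) without any argument supplied. So there is nothing in the paper to compare your attempt against.

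That said, your write-up contains a genuine gap, and it is exactly where you flag it. The ``pigeonhole/crossing argument on the in-neighbours of $u_1$ and the out-neighbours of $u_p$'' is the heart of Ore's theorem in the \emph{undirected} setting: there one finds an index $i$ with $u_1u_{i+1}\in E$ and $u_iu_p\in E$ and closes the path into a cycle by \emph{reversing} the segment $u_1\cdots u_i$. In a digraph you cannot reverse a directed subpath, so this rerouting is unavailable. Concretely, even if you locate $i$ with $(u_i,u_1)\in E(G)$ and $(u_p,u_{i+1})\in E(G)$, the only cycle you get from these two edges together with $P$ is $u_1\cdots u_i u_1$ or $u_{i+1}\cdots u_p u_{i+1}$, neither of which spans $V(P)$. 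The directed analogue of the Ore crossing simply does not produce a spanning cycle of $V(P)$; this is precisely why Ghouila-Houri's theorem is substantially harder than Dirac/Ore and why its standard proofs (e.g.\ via a longest cycle plus a carefully chosen maximal path outside it, or Bondy--Thomassen style arguments) are more elaborate. Your alternative sketch via a longest cycle runs into the same wall, as you yourself note: the degree hypothesis on $G-V(C)$ does not give Hamilton-connectedness, and strong connectivity of $G$ does not let you prescribe both endpoints of a spanning path in $G-V(C)$.

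In short: the reduction ``spanning cycle on $V(P)$ plus strong connectivity $\Rightarrow$ Hamiltonicity'' and the derivation of the ``in particular'' clause are fine, but the step producing the spanning cycle on $V(P)$ is the entire content of the theorem, and the undirected crossing trick you gesture at does not survive orientation.
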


Let $D_{n,n}$ denote the complete bipartite balanced digraph in which both vertex classes have size $n$ and every vertex has in- and outdegree $n$.
A result by Ng~\cite{NG1997279} implies that the edge set of $D_{n,n}$ can be decomposed into Hamilton cycles. We use this to prove the following. 
\begin{lemma}\label{lem:completebipartite}
There exists $n_0\in \N$ such that for all $n\geq n_0$ the complete bipartite digraph $D_{n,n}$ contains $n$ disjoint Hamilton paths starting in the same vertex class of the bipartition. 
Moreover, 
every vertex of $D_{n,n}$ is an endpoint of at most $2\sqrt{\log n}$ of these paths. 
\end{lemma}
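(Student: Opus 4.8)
The plan is to build the $n$ paths from a Hamilton decomposition of $D_{n,n}$ and to use the freedom in \emph{where} to cut each cycle to spread out the endpoints. Since $D_{n,n}$ has exactly $2n^2$ edges and every Hamilton cycle of it has $2n$ edges, the decomposition provided by the result of Ng~\cite{NG1997279} mentioned above consists, for $n$ large, of exactly $n$ edge-disjoint Hamilton cycles $C_1,\dots,C_n$. Let $(V_1,V_2)$ be the bipartition. Any directed cycle in a balanced bipartite digraph alternates between $V_1$ and $V_2$, so the edges of $C_c$ directed from $V_2$ to $V_1$ form a perfect matching $M_c$ between $V_2$ and $V_1$ (each vertex of $V_2$ has a unique out-edge in $C_c$ and it goes to $V_1$, and symmetrically for in-edges of $V_1$). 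Hence, for any choice of $e_c=(u_c,w_c)\in M_c$ (with $u_c\in V_2$, $w_c\in V_1$), the digraph $P_c:=C_c-e_c$ is a directed Hamilton path from $w_c\in V_1$ to $u_c\in V_2$, its set of endpoints is exactly $\{u_c,w_c\}$, and $P_1,\dots,P_n$ are pairwise edge-disjoint because $C_1,\dots,C_n$ are. Thus it remains to choose an $e_c\in M_c$ for every $c\in[n]$ so that each vertex of $V_1\cup V_2$ lies in at most $2\sqrt{\log n}$ of the chosen edges --- equivalently, is an endpoint of at most $2\sqrt{\log n}$ of the paths $P_c$.

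For this, split $[n]$ into two parts $A$ and $B$, each of size at most $\lceil n/2\rceil$, and pick the edges $\{e_c:c\in A\}$ greedily so that they are pairwise vertex-disjoint: once $i-1$ of them have been chosen they cover $2(i-1)\le 2(\lceil n/2\rceil-1)<n$ vertices, and each of these vertices lies in exactly one edge of $M_c$, so at most $2(i-1)<n=|M_c|$ edges of $M_c$ are forbidden and some edge can still be chosen. Do the same for $B$. Then every vertex of $V_1\cup V_2$ lies in at most one $e_c$ with $c\in A$ and at most one with $c\in B$, hence in at most two of the $e_c$ in total; choosing $n_0$ large enough that also $2\le 2\sqrt{\log n}$ completes the proof. (This in fact proves the stronger assertion that every vertex is an endpoint of at most two of the $n$ paths.)

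The argument is short, and its only subtle points are the edge count that pins the number of cycles in the decomposition at exactly $n$, and the observation that the ``backward'' edges of a bipartite Hamilton cycle form a perfect matching; after that the greedy ``rainbow matching'' step is immediate, since each $M_c$ has $n$ edges, far more than the at most $n-2$ that a partial matching covering only half the colour classes can block. I would flag one pitfall: the cruder approach of deleting a \emph{uniformly random} backward edge from each $C_c$ does \emph{not} prove the lemma as stated. There each vertex is an endpoint of a $\mathrm{Bin}(n,1/n)$ number of paths, and the maximum of these counts over the $2n$ vertices is $\Theta(\log n/\log\log n)$ with high probability, which is larger than $2\sqrt{\log n}$; so some nonrandom (or at least load-balanced) choice of the deleted edges, such as the greedy matching above, is genuinely needed.
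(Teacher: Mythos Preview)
Your proof is correct and in fact establishes the stronger statement that every vertex is an endpoint of at most two of the $n$ paths. The paper takes exactly the random route you flag in your final paragraph: it uses Ng's decomposition into $n$ Hamilton cycles, deletes from each cycle a uniformly random edge directed from $A$ to $B$, observes that for each fixed vertex the number of deleted edges incident to it is $\mathrm{Bin}(n,1/n)$, and then applies the Chernoff form $\Pr(|X-\mu|\ge a)\le 2e^{-a^2/(3\mu)}$ with $\mu=1$ and $a=2\sqrt{\log n}$ to get a tail of $e^{-4\log n/3}$, finishing by a union bound over the $2n$ vertices. Your closing remark is on point and in fact identifies a genuine gap in the paper's argument: that additive Chernoff form is only valid when $a=O(\mu)$, and in the regime $a\gg\mu$ the upper tail of $\mathrm{Bin}(n,1/n)$ at level $k$ decays only like $e^{-\Theta(k\log k)}$, so with $k=2\sqrt{\log n}$ the bound does not survive a union bound over $2n$ vertices; indeed the maximum of $2n$ i.i.d.\ $\mathrm{Bin}(n,1/n)$ variables is $(1+o(1))\log n/\log\log n\gg 2\sqrt{\log n}$ with high probability. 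Your deterministic greedy choice sidesteps this entirely and is both simpler and sharper. (For the downstream application in the proof of Lemma~\ref{lem:pathcover} any $o(b)$ bound on the endpoint multiplicity would suffice, so the paper's main result is unaffected; but the lemma \emph{as stated} needs either your argument or a weakened conclusion.)
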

\begin{proof}
Let $A$ and $B$ denote the vertex classes of $D_{n,n}$. 
It follows from Ng~\cite{NG1997279} that there is a decomposition of 
$D_{n,n}$ into $n$ Hamilton cycles, say $C_1,...,C_n$. 
For every $i\in [n]$ choose an edge $e_i=(a_i,b_i)$ of $C_i$ with $a_i\in A$ uniformly at random among all $n$ such edges, all choices being independent. Denote their union by $H$. We claim that with positive probability  $\Delta^0(H)$ 
is at most $2\sqrt{\log n}$. 

Fix a vertex $v\in A$. Then for each vertex $w\in B$, the edge $(v,w)$ is in $H$ with probability~ $1/n$. Moreover, the events $E_w=\{\mbox{the edge }(v,w)\mbox{ is in }H\}$ are independent since for any two distinct vertices $w,w'\in B$ the edges $(v,w)$ and $(v,w')$ are in different cycles of the decomposition. Therefore, the out-degree of $v$ in $H$ has a binomial distribution with parameters $n$ and $1/n$. 
Similarly, the in-degree of $w$ in $H$ has a binomial distribution with parameters $n$ and $1/n$ for every $w\in B$. Therefore, the probability that there exists $v\in A$ with $d^+_H(v)>2\sqrt{\log n}$ or $w\in B$ with $d^-_H(w)>2\sqrt{\log n}$ is at most $4ne^{-4\log n/3}=o(1)$, by Chernoff's inequality (Lemma \ref{lem:chernoff}) and the union bound.
It follows that with positive probability $H$ has maximum semidegree at most $2\sqrt{\log n}$. 
The claim follows by taking $\{C_i-e_i\}_{i\in[n]}$, as the collection of Hamilton paths. By the choice of $e_i$'s all these paths start in $B$. 
\end{proof}

Finally, we use the following from~\cite{ferber2018}. 
\begin{lemma}[Lemma 24 in \cite{ferber2018}]\label{lem:lem23}
Let $\eps>0$ and $m,r\in \NN$ with $m$ sufficiently large and $2m^{24/25}\leq r \leq (1-\eps)m/2$. Suppose that $G=(A\cup B,E)$ is a bipartite graph with $|A|=|B|=m$ and $r\leq \delta(G) \leq \Delta(G)\leq r+r^{2/3}$. Then $G$ contains a collection of $r-m^{24/25}$ edge-disjoint matchings, each of which has size at least $m-m^{7/8}$, and whose union has minimum degree at least $r-m^{24/25}-2m^{5/6}$.
\end{lemma}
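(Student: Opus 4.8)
The plan is to exploit that bipartite graphs have chromatic index equal to their maximum degree, and then to pass to a \emph{random} subfamily of the colour classes, so that no vertex ends up covered by too few of them.

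Since $G$ is bipartite, K\"onig's edge-colouring theorem yields a proper edge colouring of $G$ with exactly $k:=\Delta(G)$ colours; the colour classes $M_1,\dots,M_k$ are pairwise edge-disjoint matchings, and $r\le\delta(G)\le k\le\Delta(G)\le r+r^{2/3}$. Call a colour class \emph{large} if it has at least $m-m^{7/8}$ edges, let $L\se[k]$ index the large classes, and set $t:=k-|L|$. As $G$ is balanced bipartite with parts of size $m$, $\sum_i|M_i|=e(G)\ge\delta(G)\,m\ge rm$, while each $|M_i|\le m$; hence $rm\le|L|\,m+t(m-m^{7/8})=km-t\,m^{7/8}$, so $t\le(k-r)m/m^{7/8}\le r^{2/3}m/m^{7/8}\le m^{19/24}$ (using $r\le m$). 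In particular $|L|\ge r-m^{19/24}\ge r-m^{24/25}$, and every vertex $v$, being covered by exactly $d_G(v)\ge r$ of the classes and by at most $t$ small ones, is covered by at least $r-m^{19/24}$ large classes.

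Now I would pick a set $S\se L$ with $|S|=r-m^{24/25}$ uniformly at random and output $\{M_i:i\in S\}$. These are automatically $r-m^{24/25}$ edge-disjoint matchings, each of size at least $m-m^{7/8}$ since $S\se L$, so only the minimum degree of $H:=\bigcup_{i\in S}M_i$ needs attention. For a vertex $v$, $d_H(v)$ equals the number of $i\in S$ for which $v$ is covered by $M_i$, a hypergeometric random variable with mean
\[
\mu_v=(r-m^{24/25})\cdot\frac{\big|\{i\in L:\ v\text{ covered by }M_i\}\big|}{|L|}\ \ge\ (r-m^{24/25})\cdot\frac{r-m^{19/24}}{r+r^{2/3}}\ \ge\ r-m^{24/25}-m^{19/24}-r^{2/3},
\]
and $\mu_v\le r$. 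Applying the Chernoff-type bound of Lemma~\ref{lem:chernoff} and Remark~\ref{rem:hypergeom} with deviation $a=\sqrt{6r\log m}$ gives $\Pr\big(d_H(v)<\mu_v-a\big)\le 2m^{-2}$; a union bound over the $2m$ vertices then shows that with positive probability every $v$ satisfies
\[
d_H(v)\ \ge\ \mu_v-a\ \ge\ r-m^{24/25}-m^{19/24}-r^{2/3}-\sqrt{6r\log m}\ \ge\ r-m^{24/25}-2m^{5/6},
\]
the last step using $r\le m$ and $m$ large (so that $m^{19/24}+r^{2/3}+\sqrt{6r\log m}\le 2m^{5/6}$). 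Fixing an $S$ with this property finishes the proof.

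The step I expect to be the real obstacle is the minimum-degree bound on the union: one cannot afford to discard the small colour classes and then trim down to $r-m^{24/25}$ classes in an arbitrary way, since a single vertex might be covered by all of the discarded classes and lose about $r^{2/3}+m^{24/25}$ from its degree — more than the permitted slack $2m^{5/6}$. Choosing $S$ at random is precisely what forces the discarded classes to miss each vertex roughly the average number of times, so the loss is only $O(m^{19/24})$ at every vertex; after that, verifying the polynomial inequalities in the regime $2m^{24/25}\le r\le(1-\eps)m/2$ is routine bookkeeping.
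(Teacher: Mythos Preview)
The paper does not prove this lemma; it is quoted verbatim as Lemma~24 of Ferber--Long--Sudakov~\cite{ferber2018} and used as a black box, so there is no in-paper argument to compare against. That said, your proof is correct and complete: K\"onig's theorem gives $k\le r+r^{2/3}$ colour classes, the counting bound $t\le r^{2/3}m^{1/8}\le m^{19/24}$ on small classes is valid, and the hypergeometric concentration for the random subfamily $S$ yields the stated minimum-degree guarantee after the routine polynomial comparisons (all of which check out in the regime $2m^{24/25}\le r\le m$). Your closing remark correctly identifies why the random choice of $S$ is essential rather than an arbitrary trim.

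If anything, your argument is slightly cleaner than necessary: you never use the hypothesis $r\le(1-\eps)m/2$, only $r\le m$, so the lemma as you prove it is marginally stronger than stated. This edge-colouring-plus-random-subsampling approach is the natural one and is almost certainly what the cited source does as well.
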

\begin{remark}\label{rem:NearlyBalanced}
Note that practically the same assertion holds when $|A| = m=|B|+1$, up to an additive constant of 1 which we neglect due to the asymptotic nature of the statement. To see this, apply the lemma to the graph obtained by adding an auxiliary vertex $v$ to $B$ and $\delta(G)$ edges between $v$ and $A$. 
\end{remark}

% ==================================
%
% SECTION 3 -- PROOF
%
% ==================================
\section{Proof of Theorems~\ref{thm:main} and~\ref{thm:main2}}

In this section we prove our main theorems. The two proofs are very similar and we treat them together for a large part of this section. We state lemmas along the way that we either prove in the appendix (Lemma~\ref{lem:partition}) or at the end of the section (Lemmas~\ref{lem:pathcover},~\ref{lem:pathstohamcycle} and~\ref{lem:pathstoLONGcycle}). We introduce some notation specific to the proof. 

A \emph{path cover of size $k$ of a directed graph $H$} is a set $\cP$ of $k$ directed paths in $H$ such that every vertex is contained in exactly one path of $\cP$. Every digraph $H$ contains a {\em trivial path cover} in which every path consists of exactly one vertex of $H$, whereas a Hamilton path, if existent, is a path cover of size one. 
We call two path covers $\cP_1$ and $\cP_2$ {\em edge-disjoint} if any two paths $P_1\in \cP_1$ and $P_2\in \cP_2$ are edge-disjoint. 
Given a set of path covers ${\bf P}$ 
of a digraph $H$, we denote by 
$G_{\bf{P}}$ the graph whose edge set is formed by taking the union of all sets $E(P)$, for all paths $P\in \cP$, for all path covers $\cP\in {\bf P}$. 

Let $c>\eps>0$ where we may assume for the proof that $\eps$ is sufficiently small. Let $n$ be a sufficiently large integer. Let $d= cn$ and assume that $G$ is a balanced $d$-regular bipartite digraph on $2n$ vertices. 

The next lemma asserts that we can split $G$ into roughly $(\log n)^3$ spanning subgraphs, each with good degree conditions into certain subsets. 
\begin{lemma}\label{lem:partition}
Let $c>\eps>0$ be constants, let $n$ be sufficiently large. 
Let $D$ be a $d$-regular bipartite digraph with bipartition $(A,B)$ such that $|A|=|B|=n$, where $d= cn$. 
Then for $K=\log n$  there are $K^3$ edge-disjoint spanning subdigraphs 
$H_1,...,H_{K^3}$ of $D$ with the following properties. 
\begin{enumerate}[label={(P\arabic*)}]
    \item \label{PPartition} 
    For each $1\leq i\leq K^3$ there is a partition 
    $V(G)=U_i\cup W_i$ with $|W_{i}^A|=|W_{i}^B|=n/K^2\pm 1$;
    \item \label{PdegreesInUis}  
    For some $r= (1\pm \eps)d/K^3$ and all $1\leq i\leq K^3$, 
    the induced subgraph $H_i[U_i]$ satisfies 
    \[\delta^0(H_i[U_i]),\Delta^0(H_i[U_i])= r\pm r^{3/5}; \]
    \item \label{PdegreeIntoWis} 
    For all $1\leq i\leq K^3$ and all $u\in U_i$ we have that 
    $d^\pm_{H_i}(u,W_i)\geq \eps c |W_i|/8K$;
    \item \label{PsemiDegInWis}  
    Each induced subgraph $H_i[W_i]$ has minimum semidegree at least $(c-\eps)|W_i|/2$.
\end{enumerate}
\end{lemma}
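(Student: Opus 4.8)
The natural approach is a two-stage random partition. First I would randomly split $V(G)$ into $K^3$ "work cells" that will later play the role of the $W_i$'s, together with a "leftover" region; then I would randomly partition the edge set of $D$ into $K^3$ spanning subdigraphs $H_1,\dots,H_{K^3}$ by placing each edge independently and uniformly into one of the $K^3$ classes. The idea is that since each vertex has in- and out-degree exactly $d = cn$ in $D$, after the edge-split each vertex has in- and out-degree concentrated around $d/K^3$ in each $H_i$; and after intersecting with the appropriate vertex sets, the degrees into $U_i$, into $W_i$, and inside $W_i$ all concentrate around their expectations, which are exactly the quantities appearing in \ref{PdegreesInUis}--\ref{PsemiDegInWis}. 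So the whole lemma should follow from Chernoff bounds (Lemma~\ref{lem:chernoff}), its hypergeometric analogue (Remark~\ref{rem:hypergeom}), and a union bound over the $O(n)$ vertices and $K^3 = \mathrm{polylog}(n)$ subgraphs.

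**Key steps, in order.** (1) Fix $K = \log n$ and set $r := d/K^3 = cn/K^3$; note $r$ is polynomially large in $n$, so error terms of order $r^{3/5}$, $\sqrt{r\log n}$, etc., are all $\ll r$. (2) For each $i$, choose $W_i$ by picking $n/K^2$ vertices of $A$ and $n/K^2$ of $B$ uniformly at random; I would do this so that the $W_i$ are pairwise disjoint (e.g.\ partition $A$ and $B$ into $K^3$ blocks of size $n/K^2$ each, plus a small remainder absorbed into the $U_i$'s), which makes the later independence arguments cleaner, and set $U_i = V(G) \setminus W_i$. This immediately gives \ref{PPartition} with the claimed $\pm 1$ slack from divisibility. (3) Independently of the vertex partition, colour every edge of $D$ with one of $K^3$ colours uniformly at random and independently; let $H_i$ be the subdigraph of edges of colour $i$. (4) For a fixed vertex $u$ and fixed $i$: $d^+_{H_i}(u)$ is $\mathrm{Bin}(d, 1/K^3)$, so it is $r \pm \sqrt{r\log n}$ with probability $1 - n^{-\omega(1)}$; conditioning on which edges land in $H_i$, the number of those edges going into $U_i$ (resp.\ $W_i$) is hypergeometric and concentrates around $(|U_i|/n)\cdot d^+_{H_i}(u)$ (resp.\ $(|W_i|/n)\cdot d^+_{H_i}(u)$). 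Since $|U_i|/n = 1 - 1/K^2 = 1 \pm o(1)$, this yields $d^+_{H_i[U_i]}(u) = r \pm r^{3/5}$ for $u \in U_i$, giving \ref{PdegreesInUis}, and $d^+_{H_i}(u, W_i) = (1\pm o(1)) r / K^2 \gg \eps|W_i|/16K = \eps n/(16 K^3)$ for $u \in U_i$ (note $r/K^2 = cn/K^5$ versus $\eps n/16K^3$, and here one uses $c > 1/2$ and that $\eps$ is small, or simply that $r/K^2 = cn/K^5 \le \eps n/16K^3$ is actually \emph{false} — so I must be more careful: \ref{PdegreeIntoWis} only needs a lower bound of order $n/K^3$, and $d^+_{H_i}(u,W_i)$ has expectation $r/K^2 = cn/K^5$, which is \emph{smaller}; thus the split must send a larger fraction of each vertex's edges toward $W_i$, meaning $W_i$ should have size $\Theta(n/K^2)$ but the edges into it should not be uniformly $1/K^3$ of the total — this is the subtlety, see below). (5) For \ref{PsemiDegInWis}, for $u \in W_i$ the degree of $u$ inside $W_i$ within the \emph{whole} graph $D$ is hypergeometric with mean $(|W_i|/n) d = c|W_i|$, and then restricting to $H_i$ multiplies by $1/K^3$; to get a bound of order $(c-\eps)|W_i|/2$ one must \emph{not} thin the edges inside $W_i$ by $1/K^3$.

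**The main obstacle.** The delicate point is reconciling \ref{PdegreesInUis} (degrees inside $U_i$ should be small, $\approx d/K^3$) with \ref{PdegreeIntoWis} and \ref{PsemiDegInWis} (degrees into and inside $W_i$ should be a constant fraction of $|W_i| \approx n/K^2$, hence of order $n/K^2 \gg d/K^3$). A single uniform $1/K^3$ edge-colouring cannot do both, since it would thin the $W_i$-edges too much. The fix is to treat the edges touching $W_i$ differently: put \emph{all} edges of $D$ with at least one endpoint in $W_i$ (or all edges inside $W_i$, plus a suitable $\eps/16K$-fraction of the $U_i$-to-$W_i$ edges) into $H_i$, and only randomly $1/K^3$-colour the edges lying entirely in $\bigcap_j U_j$. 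Since the $W_i$ are disjoint, each such "heavy" edge is forced into at most a bounded number of the $H_i$, so edge-disjointness is preserved. One then reruns the Chernoff/hypergeometric concentration with these modified (but still independent enough) random choices: degrees inside $U_i$ stay $\approx r \pm r^{3/5}$, degrees inside $W_i$ inherit essentially the full degree $\approx c|W_i|$ minus the small $U_i$-side losses (using $c > 1/2$ and small $\eps$ to clear $(c-\eps)|W_i|/2$), and the cross-degrees $d^\pm_{H_i}(u, W_i)$ for $u \in U_i$ are by construction at least the prescribed $\eps|W_i|/16K$. Getting the bookkeeping of these three regimes consistent — in particular choosing the fraction of $U_i$–$W_i$ edges routed into $H_i$ so that it is large enough for \ref{PdegreeIntoWis} yet small enough not to disturb \ref{PdegreesInUis} — is the only real work; everything else is a routine union bound over $\mathrm{poly}(n)\cdot\mathrm{polylog}(n)$ bad events.
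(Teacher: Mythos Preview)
Your diagnosis of the ``main obstacle'' is exactly right: a uniform $1/K^3$ edge-colouring would thin the $W_i$-edges too much to give \ref{PdegreeIntoWis} and \ref{PsemiDegInWis}. But your proposed fix --- take the $W_i$ pairwise disjoint and dump all $W_i$-incident edges into $H_i$ --- cannot work, for a purely arithmetic reason. The lemma demands $K^3$ sets $W_i$, each with $|W_i^A|=|W_i^B|=n/K^2\pm 1$; their total size is then about $K^3\cdot 2n/K^2 = 2Kn$, which exceeds $|V(G)|=2n$ by a factor of $K$. So the $W_i$ \emph{cannot} be pairwise disjoint: on average each vertex must lie in roughly $K$ of them. (Your parenthetical ``partition $A$ and $B$ into $K^3$ blocks of size $n/K^2$'' is exactly this impossibility.) Once the $W_i$ overlap, your rule ``put every edge touching $W_i$ into $H_i$'' would place a single edge into many $H_i$'s, destroying edge-disjointness; and your set $\bigcap_j U_j$, on which you want to do the uniform colouring, would be empty or nearly so.

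The paper resolves this by building the overlap in a controlled way: take $K$ independent random equipartitions of $A$ (and of $B$), each into $K^2$ parts; the $K\cdot K^2 = K^3$ parts are the $W_i$, and every vertex lies in exactly $K$ of them. For the edges inside some $W_i$, the paper keeps (in $H_i[W_i]$) only those that lie in $W_i$ and in \emph{no other} $W_j$; Chernoff-type bounds show almost all of $D[W_i]$ survives, which gives \ref{PsemiDegInWis}. The remaining edges (those not inside any $W_i$) are then distributed among the $H_i$ by a \emph{non-uniform} random colouring: an edge $e=(u,v)$ goes to one of the $2K$ indices $i$ with $u\in W_i$ or $v\in W_i$ with probability $\eps/2K$ each (this is what produces the cross-degree lower bound in \ref{PdegreeIntoWis}), and to one of the remaining $K^3-2K$ indices with probability $(1-\eps)/(K^3-2K)$ each (this gives the $r\pm r^{3/5}$ degrees inside $U_i$ for \ref{PdegreesInUis}). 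The overlapping-partitions idea is the missing ingredient in your plan; once you have it, the concentration and union-bound steps proceed essentially as you outlined.
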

The proof of the lemma is a straight-forward adaptation of Lemma~27 in~\cite{ferber2018} to the bipartite setting. We include it in the appendix for completeness. 

We now claim that each $H_i[U_i]$ as given by the previous lemma has many edge-disjoint path covers. Precisely, we prove the following. 
\begin{lemma} \label{lem:pathcover}
There exists a positive integer $m_0\in \N$, such that for $m\geq m_0$ and  $m^{49/50}\leq r\leq m/3$ the following is true. 
Let $H$ be a balanced bipartite digraph on $2m$ vertices 
such that $d^{\pm}(v) =r\pm r^{3/5}$ for every vertex $v$ of $H$. 
Then $H$ contains a collection ${\bf P}$ 
of at least $r-m^{24/25}\log m$ edge-disjoint path covers, each of size at most $m/\log^4 m$. Moreover, $\delta^0(G_{\bf{P}})\geq r-m/(\log m)^{39/10}$.
\end{lemma}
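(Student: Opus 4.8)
\textit{Overview.} The plan is to convert the digraph $H$ into an auxiliary bipartite (undirected) graph and apply Lemma~\ref{lem:lem23} to obtain many edge-disjoint near-perfect matchings, then interpret each matching as a near-Hamilton union of paths, i.e.\ a path cover of small size. The standard trick here is the following: given a bipartite digraph $H$ with parts $A,B$, build a bipartite graph $\hat H$ on vertex set $A' \cupdot B''$ where $A'$ is a copy of $A$ (these will be the ``tails'' of edges) and $B''$ is a copy of $B$ (the ``heads''), and similarly one should do this in both directions to handle edges in both $A\to B$ and $B\to A$; but since a path cover alternates between the two sides, the cleanest route is to take $\hat H$ with parts indexed by $V(H)$ on each side, putting an edge between the copy of $u$ on the left and the copy of $v$ on the right whenever $(u,v)\in E(H)$. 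A perfect matching in $\hat H$ then corresponds to a spanning union of vertex-disjoint directed cycles and paths in $H$; a matching missing few vertices corresponds to a union of paths and cycles covering all but a few vertices.

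\textit{Step 1: apply Lemma~\ref{lem:lem23}.} Since $d^\pm(v) = r \pm r^{3/5}$ for all $v$, the graph $\hat H$ is bipartite with both parts of size $2m$ (or $m$ on each side, depending on the construction), with $\delta(\hat H)\ge r-r^{3/5}$ and $\Delta(\hat H)\le r+r^{3/5} \le r + r^{2/3}$, and $r$ lies in the required window $2m^{24/25}\le r\le (1-\eps)m/2$ once $r\ge m^{49/50}$ and $r\le m/3$ and $m$ is large (here one uses $m^{49/50}\ge 2m^{24/25}$ for large $m$, and $m/3 \le (1-\eps)m/2$ for small $\eps$). Lemma~\ref{lem:lem23} (with Remark~\ref{rem:NearlyBalanced} if the auxiliary graph is off-balance by $1$) yields $r - m^{24/25}$ edge-disjoint matchings in $\hat H$, each of size $\ge m - m^{7/8}$, whose union has minimum degree $\ge r - m^{24/25} - 2m^{5/6}$.

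\textit{Step 2: turn matchings into path covers, fix up the small size and the cycles.} Each matching $M$ of size $m - m^{7/8}$ pulls back to a subdigraph of $H$ in which every vertex has in- and out-degree at most $1$, hence a disjoint union of directed paths and cycles covering all but $\le m^{7/8}$ vertices of one side. To make this a genuine path cover of $H$, add the $\le m^{7/8}$ uncovered vertices as singleton paths, and break each directed cycle by deleting one edge, turning it into a path. The number of paths in the resulting cover is (number of path-components) $+$ (number of cycles) $+$ (number of singletons). Each path or cycle component has at least one vertex, so the number of non-singleton components is at most $\le$ (something we need to control to get $m/\log^4 m$): this is the one point that requires care. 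A matching of size $s$ in $\hat H$ that decomposes into $c$ cycles and $p$ paths covers $s$ edges with $s - (\text{endpoints contribution})$ — more usefully, $p$ equals the number of vertices that are missed on the ``out'' side, i.e.\ $p \le 2m - s \le m^{7/8} + (m - s)$, hence $p \le O(m^{7/8})$, and similarly the number of cycles is trivially at most $m^{7/8}$... \emph{this bound is too weak}, since $m^{7/8} \gg m/\log^4 m$ is false — wait, $m^{7/8} \le m/\log^4 m$ for large $m$, so in fact $m^{7/8} = o(m/\log^4 m)$ and we are fine. Thus each path cover has size $O(m^{7/8}) \le m/\log^4 m$ for $m$ large.

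\textit{Step 3: count the lost path covers and the final degree bound.} Breaking cycles and deleting the edges incident to the $\le m^{7/8}$ missed vertices removes at most $O(m^{7/8})$ edges from each matching, but we keep all $r - m^{24/25}$ of them; however, to guarantee we still have \emph{edge-disjoint path covers} we just note the matchings were edge-disjoint in $\hat H$, hence the pulled-back subdigraphs are edge-disjoint in $H$, and deleting edges preserves this. The slight reduction from $r - m^{24/25}$ to $r - m^{24/25}\log m$ in the statement gives room to discard a few bad covers if needed (e.g.\ if a degenerate case produces too many components); I would simply not discard any and note the weaker bound holds a fortiori. For the minimum semidegree of $G_{\bf P}$: the union of the matchings has min degree $\ge r - m^{24/25} - 2m^{5/6}$ in $\hat H$; pulling back, each vertex $v$ of $H$ has out-degree (in $G_{\bf P}$) equal to its degree on the ``out-copy'' side minus the edges we deleted when breaking cycles / trimming singletons. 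Across all $\le r$ covers we delete $\le O(m^{7/8})$ edges per cover at $v$ in the worst case — no, that over-counts; a cleaner accounting: $v$ loses at most $2$ edges per cover it participates in as a cycle-vertex or trimmed endpoint, but more carefully, the total deletions at $v$ over all covers is at most (number of covers) $\cdot 2 = O(r)$, which is too weak; so instead I would argue that in each cover $v$ loses at most $2$ edges, but \emph{most} covers don't lose any at $v$ — rather, bound the total number of deleted edges incident to $v$, over all covers, by noting each deleted edge lies in a distinct matching and $v$ has degree $\le r + r^{3/5}$ total in $H$ on each side, and... Actually the clean statement is: $d^+_{G_{\bf P}}(v) \ge (\text{degree of out-copy of }v\text{ in union of matchings}) - (\text{deletions}) \ge (r - m^{24/25} - 2m^{5/6}) - (\text{total deletions at }v)$, and the total number of deleted edges incident to $v$ is at most the number of covers in which $v$ is a cycle vertex or a missed vertex, which is at most $r$ — giving a vacuous bound. \textbf{This is the main obstacle}, and the fix is to be smarter: when breaking a cycle, delete an edge chosen to spread the load, or observe that the missed-vertex set has size $\le m^{7/8}$ \emph{per cover} but summed over all $\le r$ covers each vertex is missed in at most $\le r \cdot m^{7/8}/m = r m^{-1/8}$ covers on average — then use that the degree loss at $v$ is $\le$ (deletions) and one needs a high-probability / averaging argument (or an explicit choice of cycle-breaking edges via Lemma~\ref{lem:lem23}'s structure) to ensure no $v$ loses more than $m/(\log m)^{39/10} - m^{24/25} - 2m^{5/6} = (1-o(1))m/(\log m)^{39/10}$ edges. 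I would handle this by randomly choosing which edge to delete from each broken cycle and from incident-to-missed-vertex trimming, then applying Chernoff (Lemma~\ref{lem:chernoff}) and a union bound over the $2m$ vertices to conclude the desired minimum semidegree $r - m/(\log m)^{39/10}$ holds with positive probability. The exponents $49/50$, $24/25$, $39/10$ are engineered so these error terms are all $o(m/(\log m)^{39/10})$, making the computation routine once the randomized cycle-breaking is set up.
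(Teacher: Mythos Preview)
Your approach has a genuine gap at the point you write ``the number of cycles is trivially at most $m^{7/8}$''. This is not true, and it is the crux of the lemma. In your auxiliary graph $\hat H$, a near-perfect matching pulls back to a spanning subdigraph of $H$ with $\Delta^0\le 1$; the number of \emph{path} components is indeed bounded by the number of unmatched vertices, hence $O(m^{7/8})$. But the number of \emph{cycle} components is completely uncontrolled: since $H$ is bipartite with parts $A,B$, the graph $\hat H$ is a disjoint union of the $A\to B$ edges and the $B\to A$ edges, and Lemma~\ref{lem:lem23} might return, say, the matchings $a_i\mapsto b_i$ and $b_i\mapsto a_i$ on each side. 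Their union is $m$ directed $2$-cycles, and breaking each cycle gives a path cover of size $m$, not $m/\log^4 m$. No amount of randomised cycle-breaking fixes this, because the problem is the size of the cover, not the degree loss.

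The paper supplies the missing structural idea: partition $A$ and $B$ each into $b=2\log^4 m$ equal parts $V_1^A,\dots,V_b^A$ and $V_1^B,\dots,V_b^B$, and take $b$ edge-disjoint Hamilton paths in the complete bipartite digraph $D_{b,b}$ on these index sets (via Lemma~\ref{lem:completebipartite}). For each such Hamilton path $w_{i_1}^A w_{i_2}^B\cdots w_{i_{2b}}^B$, apply Lemma~\ref{lem:lem23} to each of the $2b-1$ bipartite pieces $E(V_{i_j}^{\cdot},V_{i_{j+1}}^{\cdot})$ and concatenate one matching from each piece. Because the parts are visited in a fixed linear order, the resulting subgraph is acyclic by construction, so it is automatically a path cover, and its size is at most $m/b+(2b-1)(m/b)^{7/8}\le m/\log^4 m$. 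Summing over the $b$ Hamilton paths gives roughly $b\cdot r/b=r$ edge-disjoint covers. The degree bound $\delta^0(G_{\bf P})\ge r-m/(\log m)^{39/10}$ then falls out cleanly: a vertex misses out-edges only in those Hamilton paths whose last part is the one containing it, and Lemma~\ref{lem:completebipartite} ensures this happens for at most $2\sqrt{\log b}$ of the $b$ paths. Your randomised cycle-breaking scheme is not needed (and would not rescue the argument anyway, given the cover-size issue above).
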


In the proofs of Theorems~\ref{thm:main} and~\ref{thm:main2}, respectively, we will apply Lemma~\ref{lem:pathcover} to each $H=H_i[U_i]$. The strategy is then to connect the paths of each path cover in $H_i[U_i]$ to a Hamilton cycle (Theorem~\ref{thm:main}) or to a long cycle (Theorem~\ref{thm:main2}) using the vertices in $W_i$ in such a way that the cycles corresponding to distinct path covers are edge disjoint. We make this precise using the following two lemmas. A subset $S$ of the vertices of a bipartite digraph $F$ with bipartition $(A,B)$ is called {\em balanced} if $|S^A|=|S^B|$. 

\begin{lemma}\label{lem:pathstohamcycle}
Let $c'>1/2$, and let $a,n'$ be positive integers such that, $a\ll n'/\log n'$. Let $F$ be a balanced bipartite digraph on $2n'$ vertices such that $\delta^0(F)\geq c'n'$. Then, given a balanced set of distinct vertices $s_1,t_1,...,s_a,t_a\in V(F)$ with respect to a balanced bipartition of $F$, there exists a path cover $\cP=\{P_1,...,P_a\}$ of $F$ such that each path $P_i$ starts at $s_i$ and ends at $t_i$.
\end{lemma}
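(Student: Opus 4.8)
The plan is to build the $a$ paths one at a time, greedily, maintaining a large "reservoir" of unused vertices in which the remaining digraph still has high minimum semidegree. Observe first that it suffices to connect the prescribed endpoints: we will actually find internally disjoint paths $P_1,\dots,P_a$ with $P_i$ from $s_i$ to $t_i$ whose union covers $V(F)$, and we achieve coverage by making the last path absorb all leftover vertices. Concretely, let $c''$ be a constant with $1/2 < c'' < c'$ and set aside the task of the final path until the end. For $i = 1, \dots, a-1$, having already constructed $P_1, \dots, P_{i-1}$, let $R_i \subseteq V(F)$ be the set of vertices not yet used (including $s_i, t_i, \dots, s_a, t_a$); since $a \ll n'/\log n'$ and each path so far will be short (length $O(\log n')$, see below), we have $|V(F) \setminus R_i| = o(n')$, so $\delta^0(F[R_i]) \geq c'n' - o(n') \geq c'' n' \geq c'' |R_i|/2 \cdot (1+o(1))$. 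The key point is that $R_i$ can be kept balanced throughout by always choosing path interiors to be balanced, which is automatic for even-length paths in a bipartite digraph and can be arranged since each $P_i$ has both endpoints prescribed.

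The main step is: given a balanced bipartite digraph $F'$ on $2n''$ vertices with $\delta^0(F') \geq c'' n''$ and two vertices $s, t$ (with $s,t$ in either the same or opposite classes), find an $s$--$t$ path of bounded length using few vertices. This is a standard consequence of the expansion implied by minimum semidegree $> n''/2$: by inclusion--exclusion, $|N^+(s) \cap N^-(t)| \geq 2\delta^0(F') - 2n'' > 0$ if $s,t$ lie in opposite classes, giving a path of length $2$; if $s,t$ lie in the same class, pick any out-neighbour $x$ of $s$ (in the other class) and apply the length-$2$ argument to find an $x$--$t$ path, yielding length $3$. To keep $R_i$ balanced we instead route through a balanced set: e.g.\ if $s,t$ are in the same class, take $s \to x \to y \to t$ with $x$ in the opposite class and $y$ in the same class as $s$; the interior $\{x,y\}$ is balanced. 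The existence of such $x,y$ again follows from $\delta^0 > n''/2$ via $|N^+(s) \cap (\text{class of }x)| + |N^-(y)\cap\dots|$ type bounds; since we only ever remove $O(a) = o(n')$ vertices in total, all these intersections stay nonempty with room to spare.

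Finally, the $a$-th path $P_a$ must go from $s_a$ to $t_a$ and cover every remaining vertex. Let $R = R_a$ be the (balanced) leftover set; then $\delta^0(F[R]) \geq c' n' - o(n') \geq c'' n' \geq (c''/(1+o(1)))\cdot |R|/2 > |R|/2$ for $n'$ large. Adjoining the edge $(t_a, s_a)$ to $F[R]$ (or, if $(t_a,s_a) \notin E(F)$, arguing directly that $F[R]$ contains a Hamilton $s_a$--$t_a$ path) we get a digraph on $|R|$ vertices which is strongly connected and has $\delta^+ + \delta^- \geq |R|$, so by the Ghouila-Houri theorem (Theorem~\ref{thm:ghouila-houri}) it has a Hamilton cycle; deleting $(t_a, s_a)$ gives the desired Hamilton path of $F[R]$ from $s_a$ to $t_a$. (Strong connectivity is immediate from $\delta^0 > |R|/2$. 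One subtlety: $F[R]$ is bipartite, so it has a Hamilton path between $s_a$ and $t_a$ only if $|R^A| = |R^B|$ when $s_a,t_a$ lie in opposite classes, and $|R^A| = |R^B|\pm$nothing works out; this is exactly why we insisted that $R$ stay balanced and that the parity of $|R|$ versus the classes of $s_a,t_a$ be consistent — which we can guarantee by an appropriate choice of the parity of earlier path lengths, at the cost of at most one extra vertex per path.)

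The main obstacle is bookkeeping the parity/balance constraints: in a bipartite digraph a Hamilton path from $s_a$ to $t_a$ forces a specific relationship between $|R^A|, |R^B|$ and the classes of $s_a, t_a$, so the earlier greedy steps must be set up to leave $R_a$ with the correct balance. This is a finite adjustment (choosing each short path to have length $2$ or $3$ as needed) and is absorbed into the $o(n')$ error terms, but it is the part that requires care rather than the routine expansion arguments.
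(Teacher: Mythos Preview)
Your overall strategy---route the first $a-1$ pairs by short paths and then absorb all remaining vertices into a Hamilton $s_a$--$t_a$ path in $F[R]$---differs from the paper's (which instead randomly partitions $V(F)$ into $a$ nearly equal pieces $W_i\ni s_i,t_i$ and finds a Hamilton path in each $F[W_i]$), but both approaches reduce to the same core sub-problem: produce a Hamilton path between two prescribed vertices in a balanced bipartite digraph whose minimum semidegree exceeds half the size of one vertex class.

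This is precisely where your argument breaks. You assert $\delta^0(F[R])>|R|/2$ and invoke Theorem~\ref{thm:ghouila-houri}, but the inequality is false: $|R|=2n'-o(n')$ while $\delta^0(F[R])\le c'n'$, and $c'$ may be any constant in $(1/2,1]$, so $\delta^0(F[R])$ can be as small as about $|R|/4$. (In your chain ``$c''n'\geq (c''/(1+o(1)))\cdot|R|/2>|R|/2$'' the final step would require $c''>1$.) A balanced bipartite digraph on $2m$ vertices has every semidegree at most $m$, so the hypothesis $\delta^++\delta^-\ge 2m$ of Ghouila--Houri is essentially never available here. The paper circumvents this with a contraction device (Claim~\ref{claim:HamPath}): use Hall's theorem to find a perfect matching directed from $B\setminus\{y\}$ to $A\setminus\{x\}$, contract each matching edge to a single vertex to obtain an auxiliary digraph $H$ on only $m$ vertices in which $\delta^-(H)+\delta^+(H)\ge m$, apply Theorem~\ref{thm:ghouila-houri} to $H$, and unwind the resulting Hamilton cycle into a Hamilton $x$--$y$ path of the bipartite host. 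Without this (or an equivalent bipartite Hamilton-path lemma) your absorbing step has no force; once you supply it, your greedy scheme together with the balance bookkeeping you already flag would go through.
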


\begin{lemma}\label{lem:pathstoLONGcycle}
Let $c'>1/4$, and let $a,n'$ be positive integers such that, $a\ll n'/\log n'$. Let $F$ be a balanced bipartite oriented graph on $2n'$ vertices such that $\delta^0(F)\geq c'n'$. Then, given a set of distinct vertices $s_1,t_1,...,s_a,t_a\in V(F)$, there exists a collection of pairwise vertex disjoint paths $\{P_1,...,P_a\}$ of $F$ such that each path $P_i$ starts at $s_i$ and ends at $t_i$. 
\end{lemma}

We are ready to prove our main theorems. 

\begin{proof}[Proof of Theorem~\ref{thm:main}]
Let $c>1/2$, $\eps>0$ where we may assume for the proof that $\eps$ is sufficiently small. Let $n$ be a sufficiently large integer. Let $d= cn$ and assume that $G$ is a balanced $d$-regular bipartite digraph on $2n$ vertices. 
Let $K=\log n$ and let $H_1,\ldots, H_{K^3}$ be the subdigraphs given by Lemma~\ref{lem:partition} satisfying the properties \ref{PPartition}--\ref{PsemiDegInWis}. 

For each $i\in[K^3]$ we apply Lemma~\ref{lem:pathcover} with $m= |U_{i}^A|=|U_{i}^B|=n-n/K^2\pm 1$ and $r$ given by \ref{PdegreesInUis}. Note that $r=(1\pm \eps)d/K^3=\Theta(n/K^3)$ and $H_i[U_i]$ is balanced so that the assumptions of Lemma~\ref{lem:pathcover} are satisfied for $H=H_i[U_i]$. 
Therefore, for every $i\in[K^3]$, we obtain a collection ${\bf P}^{(i)}$ of at least $r'=r-n^{24/25}\log n$ edge-disjoint path covers of $H_i[U_i]$, each of size at most $a=n/\log ^4 n$, and such that 
\begin{equation}\label{eq:aux081}
\delta^0(G_{{\bf P}^{(i)}})\geq r-n/(\log n)^{39/10}.
\end{equation} 

Now fix $i\in [K^3]$ and let $\cP_1^{(i)},\ldots,\cP_{r'}^{(i)}$ be $r'$ path covers of ${\bf P}^{(i)}$ as above. We iteratively find $r'$ edge-disjoint Hamilton cycles $C_1^{(i)},\ldots,C_{r'}^{(i)}$ in $H_i$ such that $C_k^{(i)}[U_i]$ consists exactly of the edges in $\cP_k^{(i)}$, for all $1\le k \le r'$. In other words, the paths in $\cP_k^{(i)}$ are connected to a cycle $C_k^{(i)}$ via edges in $E(U_i,W_i)\cup E(W_i,U_i)\cup E(W_i)$.  
For $1\le k\le r'$ suppose that we have obtained such $k-1$ edge disjoint Hamilton cycles $C_1^{(i)},\ldots,C_{k-1}^{(i)}$. Let $F_k$ be the graph obtained from $H_i$ by removing the edges of those $k-1$ cycles.
Let $(x_1,y_1),\ldots,(x_{\ell},y_{\ell})$ be the pairs of start and end points of the paths in $\cP_k^{(i)}$, and note that $\ell \le n/\log^4 n$. 
We now greedily pick pairwise distinct vertices $s_1,t_1,\ldots,s_{\ell},t_{\ell}\in W_i$ such that 
\begin{equation}\label{eq:connectingEdges}
(y_1,s_1),( t_1,x_2),\ldots,(y_{\ell},s_{\ell}),(t_{\ell},x_1) \in E(F_k).
\end{equation}
We verify briefly that this is indeed possible. For a vertex $v\in \{x_1,y_1,\ldots,x_{\ell},y_{\ell}\}\se U_i$ we have that $d_{H_i}^\pm (v,W_i)\ge \eps |W_i|/16K$, by~\ref{PdegreeIntoWis} and since $c>1/2.$ An edge in $E(v,W_i)$ (or $E(W_i,v)$, respectively) is removed from $H_i$ only if $v$ is the endpoint (or startpoint, respectively) of a path in 
$\bigcup_{j=1}^{k-1} \cP_j^{(i)}$ (and in this case, at most one edge is removed from $H_i$). 
Since $\delta^0(G_{{\bf P}^{(i)}})\geq r-n/(\log n)^{39/10}\ge r'-n/(\log n)^{39/10}$ by~\eqref{eq:aux081}, it follows that every $v\in U_i$ is the start (or end) point of at most 
$n/(\log n)^{39/10}$ paths in $\bigcup_{j=1}^{r'} \cP_j^{(i)}$. 
Thus, 
$$d_{F_k}^+(v,W_i)\ge d_{H_i}^+ (v,W_i) - n/(\log n)^{39/10}>0$$
at each step, and we can indeed pick $s_1,t_1,\ldots, s_{\ell},t_{\ell}$ greedily in $W_i$ such that 
\eqref{eq:connectingEdges} holds.

We verify that $F_k[W_i]$, together with the set $\{s_1,t_1,s_2,t_2,\ldots,t_\ell\}$ satisfies the assumptions of Lemma~\ref{lem:pathstohamcycle}. Note that $n'= |W_i^A|=n/K^2 \pm 1$. Furthermore, the path cover $\cP_k^{(i)}$ has size at most $n/\log^4n$, hence $\ell\le n/\log^4n\ll n'/\log n'.$  Now, 
$\delta^{0}(F_k[W_i])\ge (c-\eps)n' - (k-1)$ by~\ref{PsemiDegInWis} and since the only edges incident to vertices in $W_i$ that were removed from $H_i$ are those belonging to the Hamilton cycles $C_1^{(i)},\ldots,C_{k-1}^{(i)}$. This implies that $\delta^{0}(F_k[W_i])\ge c'n'$ for some $c'>1/2$, since $c>1/2$, $\eps>0$ is small enough, and $k\ll n'$.
Finally, the set of vertices $s_1,t_1,s_2,t_2,\ldots,t_\ell$ is balanced because the set $x_1,y_1,...,x_\ell,y_\ell$ of endpoints of paths in $\cP$ is also balanced.

Therefore, by Lemma~\ref{lem:pathstohamcycle}, $F_k[W_i]$ contains a path cover $\cP=\{P_1,\ldots,P_{\ell}\}$ such that $P_j$ is an $s_j$-$t_j$-path for $1\le j\le \ell$. These paths, together with the paths in $\cP_k^{(i)}$ and the edges in~\eqref{eq:connectingEdges} form a Hamilton cycle $C_{k}^{(i)}$ in $F_k\se H_i$ that is edge-disjoint from $C_1^{(i)},\ldots,C_{k-1}^{(i)}$ and from the paths in $\cP_{k+1}^{(i)},\ldots,\cP_{r'}^{(i)}$. 

Thus, after $r'$ iterations, we obtain the desired edge-disjoint Hamilton cycles 
$C_1^{(i)},\ldots,C_{r'}^{(i)}$ of $H_i$. 
Treating all $K^3$ subgraphs $H_i$ in parallel (recall that they were edge-disjoint), we obtain $K^3 r'\ge (1-2\eps)d$ edge-disjoint Hamilton cycles of $G$. 
\end{proof} 

\begin{proof}[Proof of Theorem~\ref{thm:main2}]
The proof is similar to the proof of Theorem~\ref{thm:main} and so we merely sketch it and point out the differences. 

Let $c>1/4$, $\eps>0$ where we may assume for the proof that $\eps$ is sufficiently small. Let $n$ be a sufficiently large integer. Let $d= cn$ and assume that $G$ is a balanced $d$-regular bipartite oriented graph on $2n$ vertices. Obviously, an oriented graph is a digraph, and so Lemmas~\ref{lem:partition} and~\ref{lem:pathcover} apply to this case just as above. 
Thus we obtain $K^3=\log^3n$ oriented subgraphs $H_1,\ldots, H_{K^3}$ satisfying the properties \ref{PPartition}--\ref{PsemiDegInWis} as in the previous proof.  
Furthermore, for every $i\in[K^3]$, we obtain a collection ${\bf P}^{(i)}$ of at least $r'=r-n^{24/25}\log n$ edge-disjoint path covers of $H_i[U_i]$, each of size at most $a=n/\log ^4 n$, and such that 
\eqref{eq:aux081} holds. 

Now fix $i\in [K^3]$ and let $\cP_1^{(i)},\ldots,\cP_{r'}^{(i)}$ be $r'$ of those path covers of ${\bf P}^{(i)}.$ We iteratively find $r'$ edge-disjoint cycles $C_1^{(i)},\ldots,C_{r'}^{(i)}$ in $H_i$ such that $C_k^{(i)}[U_i]$ consists exactly of the edges in $\cP_k^{(i)}$, for all $1\le k \le r'$. That is, again, the paths in $\cP_k^{(i)}$ are connected to a cycle $C_k^{(i)}$ via edges in $E(U_i,W_i)\cup E(W_i,U_i)\cup E(W_i)$.  
For $1\le k\le r'$ suppose that we have obtained such $k-1$ edge disjoint cycles $C_1^{(i)},\ldots,C_{k-1}^{(i)}$. Let $F_k$ be the graph obtained from $H_i$ by removing the edges of those $k-1$ cycles.
The argument why we can greedily pick pairwise distinct vertices $s_1,t_1,\ldots,s_{\ell},t_{\ell}\in W_i$ satisfying~\eqref{eq:connectingEdges} only differs in the constant factor in the lower bound 
$d_{H_i}^\pm (v,W_i)\ge \eps |W_i|/32K$, but the rest of the argument is essentially the same. 

Similarly, we obtain analogously to above that $\delta^{0}(F_k[W_i])\ge c'n'$ for some $c'>1/4$. 

Now instead of Lemma~\ref{lem:pathstohamcycle} we use Lemma~\ref{lem:pathstoLONGcycle} to find a collection $\{P_1,\ldots,P_{\ell}\}$ of pairwise vertex disjoint paths in $F_k[W_i]$ such that $P_j$ is an $s_j$-$t_j$-path for $1\le j\le \ell$. These paths, together with the paths in $\cP_k^{(i)}$ and the edges in~\eqref{eq:connectingEdges} form a cycle $C_{k}^{(i)}$ in $F_k\se H_i$ that is edge-disjoint from $C_1^{(i)},\ldots,C_{k-1}^{(i)}$ and from the paths in $\cP_{k+1}^{(i)},\ldots,\cP_{r'}^{(i)}$. Since $C_{k}^{(i)}$ covers all the vertices of $U_i$ this implies that the length of $C_{k}^{(i)}$ is at least $|U_i| = n-O(n/\log^2n).$ The rest is analogous to the proof above. 
\end{proof} 

It remains to prove Lemmas~\ref{lem:partition}, \ref{lem:pathcover}, \ref{lem:pathstohamcycle}, and~\ref{lem:pathstoLONGcycle}. As noted earlier, we move the proof of Lemma~\ref{lem:partition} to the appendix due to its similarity with its counterpart in~\cite{ferber2018}.

\begin{proof}[Proof of Lemma~\ref{lem:pathcover}]
Let $(A,B)$ be a bipartition of $H$ such that $|A|=|B|=m$, and let $b=2\log^4m$. Let $V_1^A,\ldots,V_b^A$ and $V_1^B,\ldots,V_b^B$ be partitions of $A$ and $B$ respectively, chosen independently and uniformly at random among all partitions such that $|V_i^A|=|V_i^B|= m/b$ for all $i$.  
For a fixed $i\in[b]$ and a fixed vertex $v\in A$, the random variable $d^+(v,V_i^B)$ has a hypergeometric distribution with parameters $(m,d^+(v),m/b)$. Therefore, the probability that $|d^+(v,V_i^B) - r/b|> (r/b)^{3/5}$ is at most $\exp(-(r/b)^{1/5}/6)$, by Remark~\ref{rem:hypergeom} and since $d^+(v,B)=r\pm r^{3/5}$ by assumption. A similar concentration argument applies to $d^-(v,V_i^B)$ as well as to $d^{\pm}(w,V_j^A)$ for every vertex $w\in B$ and $j\in [b]$. It follows by the union bound that with probability at least  $1-8mb\exp(- (r/b)^{1/5}/6)=1-o(1)$ we have that 
\begin{align}
d^\pm(v,V_i^B)=\frac{r}{b}\pm \left(\frac{r}{b}\right)^{3/5}\text{ for all }v\in A,\; i\in [b], \text{ and}\label{aux443}\\
d^\pm(w,V_j^A)=\frac{r}{b}\pm \left(\frac{r}{b}\right)^{3/5}\text{ for all }w\in B,\; j\in [b].\label{aux444}
\end{align}
Fix partitions of $A$ and $B$ that satisfy~\eqref{aux443} and~\eqref{aux444}.

Let $(W^A,W^B)$ denote a bipartition of the complete bipartite digraph $D_{b,b}$, where the elements of the two sets are labelled 
$W^A=\{w_j^A\mid 1\le j\le b\}$ and $W^B=\{w_j^B\mid 1\le j\le b\}$. 
Then $D_{b,b}$ contains $b$ edge-disjoint Hamilton paths, say $P_1,\ldots,P_b$, all of which have their start vertex in $W^A$, and such that no vertex in $W^A\cup W^B$ is the endpoint of more than $2\sqrt{\log b}$ of these paths, by Lemma \ref{lem:completebipartite}.

Let $P_1=w_{i_1}^A...w_{i_{2b}}^B$ and let $F_{1},\ldots,F_{2b-1}$ be the corresponding bipartite subgraphs of $H$ having edge sets 
$$E(V_{i_1}^A,V_{i_2}^B), E(V_{i_2}^B,V_{i_3}^A),\ldots, E(V_{i_{2b-1}}^A,V_{i_{2b}}^B),$$ 
respectively (recall that $E(V,W)$ denotes the set of all edges of a digraph that are oriented {\em from} $V$ {\em to} $W$).

For each $j\in [2b-1]$, we apply Lemma~\ref{lem:lem23} to the digraph $F_j$ (and keep Remark~\ref{rem:NearlyBalanced} in mind in case $|V_{i_j}^A|$ and $|V_{i_{j+1}}^B|$, say, differ by 1). Note that the assumptions are satisfied with slack for $m'=m/b$ and $r'=r/b-(r/b)^{3/5}$, by~\eqref{aux443} and~\eqref{aux444}. 
We conclude that $F_j$ contains at least 
\[\frac{r}{b}-\left(\frac{r}{b}\right)^{3/5}-\left(\frac{m}{b}\right)^{24/25}\geq \frac{r}{b}-2\left(\frac{m}{b}\right)^{24/25}\]
edge-disjoint matchings, each of size at least $(m/b)-(m/b)^{7/8}$. Moreover, every vertex in $V_{i_j}^A\cup V_{i_{j+1}}^B$ (or $V_{i_j}^B\cup V_{i_{j+1}}^A$, respectively) is contained in at least 
\[\frac{r}{b}-\left(\frac{r}{b}\right)^{3/5}-\left(\frac{m}{b}\right)^{24/25}-2\left(\frac{m}{b}\right)^{5/6}\geq \frac{r}{b}-2\left(\frac{m}{b}\right)^{24/25}\]
 of these matchings.

Note that, for each $j\in[2b-1]$, all edges of $F_j$ are oriented from $V_{i_j}^A$ to $V_{i_{j+1}}^B$ if $j$ is odd, and from $V_{i_j}^B$ to $V_{i_{j+1}}^A$ if $j$ is even. 
Therefore, we may pick an arbitrary such matching from $F_j$ for every $j\in [2b-1]$ and concatenate those matchings to form a path cover $\cP$ of $H$. 

Then $\cP$ contains at least $(2b-1)(m/b-(m/b)^{7/8})$ edges 
and so it must be of size at most $m/b + (2b-1)(m/b)^{7/8}\leq m/\log^4 m$, since each of the $2m$ vertices of $H$ is in exactly one of the paths of $\cP$. 

Iteratively picking distinct matchings for each $F_j$, we obtain $r/b-2(m/b)^{24/25}$ such path covers for $P_1$. We do the same for all $b$ Hamilton paths $P_1,\ldots,P_b$ of $D_{b,b}.$ Denote the union of all path covers obtained this way by ${\bf P}$, and note that ${\bf P}$ contains at least $b\left(r/b-2(m/b)^{24/25}\right)\geq r-m^{24/25}\log m$ path covers since $m$ is large enough. Since the paths $P_1,\ldots,P_b$ are pairwise edge-disjoint it follows that the path covers in ${\bf P}$ are pairwise edge-disjoint. 

It remains to show that the graph $G_{\bf P}$ has minimum semidegree at least $r-m/(\log m)^{39/10}$.
As noted above, for every bipartite graph $F_j$ of $P_1$, $1\le j <2b-1$, every vertex in $V_{i_j}^{A/B}$ is in at least $r/b-2(m/b)^{24/25}$ matchings. That is, every such $v$ has $d^+(v,V_{i_{j+1}}^{B/A})$ at least $r/b-2(m/b)^{24/25}$ in the graph formed by the union of those matchings. The same lower bound holds for every path $P_j$ and every $v$ that is not in the vertex class of the endpoint of $P_j$. Since a particular $V_{i_j}^{A/B}$ is the ``endpoint'' of at most $2\sqrt{\log b}$ of the paths $P_1,\ldots,P_b$ we get that for all $v\in V(H)$
\begin{align*}
    d^+(v)&\ge (b-2\sqrt{\log b}) \cdot  \left(\frac{r}{b}-2\left(\frac{m}{b}\right)^{24/25}\right)
    \geq r-\frac{m}{(\log m)^{39/10}}
\end{align*}
in the graph formed by the union $\bigcup \cP_i$ of all path covers. A similar argument applies to $d^-(v)$ in $G_{\bf P}$, which finishes the proof the lemma.
\end{proof}

\begin{proof}[Proof of Lemma~\ref{lem:pathstohamcycle}]
Let $(A,B)$ be a bipartition of $F$ such that $|A|=|B|=n'$. Choose a partition $W_1\dot\cup\ldots\dot\cup W_a$ of $A\cup B$ uniformly at random from all partitions that satisfy 
\begin{enumerate}[(a)]
\item $s_i,t_i\in W_i$ for all $i$,
\item $\left| |W_i|-|W_j|\right| \le 2$ for all $i,j$,
\item $|W_i^A|-|W_i^B|=|\{s_i,t_i\}\cap A|-1$  \label{prop:lem3.3}.
\end{enumerate} 

To see that such a partition exists let $S=\{s_1,t_1,\ldots, s_a,t_a\}$, let $I_A\se [a]$ be the set of indices such that $s_i,t_i\in A$, let $I_B\se [a]$ be the set of indices such that $s_i,t_i\in B$, and let $I_m=[a]\sm(I_A\cup I_B).$ Since $S$ is balanced, $|I_A|=|I_B|$ which we denote by $a'$. Let $A'= A\sm S$, $B'= B\sm S$ and assume first that $x=(n'-a-a')/a$ is an integer. Let $W_1'\dot\cup\ldots\dot\cup W_a'$ be a partition of $A'\cup B'$ such that $|W_i'\cap A| = x$ if $i\in I_A\cup I_m$, $|W_i'\cap A| = x+1$ if $i\in I_B$, and similarly, 
$|W_i'\cap B| = x$ if $i\in I_B\cup I_m$, $|W_i'\cap B| = x+1$ if $i\in I_B$. Note that this is possible by choice of $x$ and since $|I_A|=|I_B|$. Then the partition $W_1\dot\cup\ldots\dot\cup W_a$ is a partition as desired if we let $W_i=W_i'\cup\{s_i,t_i\}$ for all $i\in A$. In this case the bound in $\mathrm{(b)}$ is even 1. When $x$ is not an integer then a similar construction works (some occurrences of $x$ replaced by $\lfloor x\rfloor$ and some by $\lceil x\rceil$), in which case the set sizes may differ by 2. 

Fix $v\in V(F)$ and $i\in [a]$. 
Note that $d^+(v,W_i\sm\{s_i,t_i\})$ has a hypergeometric distribution with parameters $(n', d^+(v,V(F)\sm S), m)$, where $m=n'/a\pm 1$ and $d^+(v,V(F)\backslash S)\ge d^+(v)-a$. 
Therefore, for all $\eps >0$ the probability that $d^+(v,W_i)< (c'-\eps)n'/a$ is at most 
$\exp(-\eps^2n'/12a)$, since $d^+(v)\ge c'n'$ and 
by Remark~\ref{rem:hypergeom}. A similar bound holds for $d^-(v,W_i)$. Taking the union bound 
we deduce that with probability $1-4n'a\exp(- \eps^2 n'/12a)=1-o(1)$ 
\begin{equation}\label{aux:653}
d^\pm(v,W_i)\geq (c'-\eps)\frac{n'}{a} > \frac{m'+3}{2} \text{ for all } v\in V(F),\; i\in[a],
\end{equation}
where $m'=\min\{|W_i^A|,|W_i^B|\}$, $\eps$ satisfies $0<\eps<c'-1/2$, and we use that $a\ll n'/\log n'$.  

Fix a partition that satisfies~\eqref{aux:653}. We claim that this is sufficient to find a Hamilton $s_i$-$t_i$-path in $F[W_i]$, for every $i\in [a]$. The following implies this already when $s_i\in A$, $t_i\in B$ (or vice versa), when, by \ref{prop:lem3.3}, we have $|W_i^A|=|W_i^B|$. 
\begin{claim}\label{claim:HamPath} Let $m'$ be a non-negative integer and let $G=(A,B)$ be a bipartite digraph  such that $|A|=|B|=m'$. Let $x\in A$, $y\in B$. If $\delta^0(G)\ge m'/2+1$ then $G$ contains a Hamilton path from $x$ to $y$.\end{claim}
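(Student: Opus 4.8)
The plan is to prove Claim~\ref{claim:HamPath} by reducing it to the Hamilton cycle criterion of Ghouila-Houri (Theorem~\ref{thm:ghouila-houri}). The natural trick is to add a single new vertex $z$ that ``closes up'' the desired path into a cycle: form a digraph $G'$ on vertex set $V(G)\cup\{z\}$ by adding the two edges $(y,z)$ and $(z,x)$ (and no others incident to $z$). Then a Hamilton cycle in $G'$ is forced to use both edges at $z$, hence to enter $z$ from $y$ and leave $z$ to $x$; deleting $z$ from such a cycle yields exactly a Hamilton $x$--$y$ path in $G$. So it suffices to check that $G'$ satisfies the hypotheses of Theorem~\ref{thm:ghouila-houri}.

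The main point to verify is the degree condition $\delta^+(G')+\delta^-(G')\ge v(G')=2m'+1$. First note that $G'$ is \emph{not} bipartite on the original sides once $z$ is added, so we should apply Ghouila-Houri directly rather than any bipartite refinement. For a vertex $v\in A$, its out-neighbours in $G$ lie in $B$, so $d^+_{G'}(v)=d^+_G(v)\ge (m'+1)/2$ (adding $z$ does not change this, since $z$ has no in-edge from $A$ except possibly none at all; indeed $z$ only receives an edge from $y\in B$). Similarly $d^-_{G'}(v)=d^-_G(v)\ge(m'+1)/2$ for every $v\in A\cup B$, \emph{except} we must be slightly careful about $x$ and $y$: $x$ gains one in-edge (from $z$) so its in-degree only goes up, and $y$ gains one out-edge (to $z$) so its out-degree only goes up; all other degrees are unchanged. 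Finally $d^+_{G'}(z)=d^-_{G'}(z)=1$. Hence $\delta^+(G')\ge\min\{(m'+1)/2,\,1\}$ and $\delta^-(G')\ge\min\{(m'+1)/2,\,1\}$, and the minimum is attained at $z$ unless $m'=0$ or $m'=1$. For $m'\ge 2$ this gives $\delta^+(G')+\delta^-(G')\ge 1+(m'+1)/2$, which is \emph{not} obviously at least $2m'+1$ — so a crude application fails, and this is the main obstacle.

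To get around this, the right move is not to bound $\delta^{\pm}(G')$ by the worst single vertex but to observe that Ghouila-Houri's condition $\delta^++\delta^-\ge n$ is only needed in the form: every vertex has total degree (in + out) at least $n$, or more precisely we should use the standard strengthening due to Woodall / the version of the theorem that asks $d^+(v)+d^-(u)\ge n$ for all nonadjacent pairs $u,v$. Actually the cleanest fix is different: rather than adding one vertex, observe that a Hamilton $x$--$y$ path in $G$ with $x\in A,y\in B$ corresponds to a Hamilton \emph{cycle} in the \emph{balanced bipartite} digraph $G^\ast$ obtained by identifying $x$ and $y$ into a single vertex $x^\ast$ — but this breaks balancedness, so instead add a new vertex $z$ on the $A$-side of a bipartition and new vertex $z'$ on the $B$-side, with edges making $z$ a ``copy'' adjacent appropriately... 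This is getting complicated; the honest approach is to prove it by a direct absorption/rotation-extension argument, or — most simply — to cite the known bipartite analogue: by a theorem of the type ``every bipartite digraph with parts of size $m'$ and $\delta^0\ge m'/2+1$ is Hamiltonian-connected in the bipartite sense'' one gets the path directly. Given the framework of this paper, I expect the authors' actual proof uses the add-a-vertex reduction together with the observation that in $G'$ one can apply Theorem~\ref{thm:ghouila-houri} after checking strong connectivity is automatic and that the degree sum condition, when computed \emph{only at the pairs that matter}, does hold — the key being that $z$ is adjacent (out to $x$, in from $y$) so the binding constraint $\delta^++\delta^-\ge n$ is replaced by the pairwise one and $z$'s low degree is compensated by $x$ and $y$ having degree $\ge(m'+1)/2\ge(n+1-m')/1$... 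In short: I would first set up the one-vertex closure, then carefully invoke the pairwise (Meyniel/Woodall-type) strengthening of Ghouila-Houri's theorem so that $z$'s degree deficiency is offset, and I expect reconciling $z$'s degree~$1$ with the global count to be exactly where the argument must be done with care.

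Here is the skeleton I would write out:

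\begin{proof}[Proof sketch of Claim~\ref{claim:HamPath}]
Form $G'$ from $G$ by adding a vertex $z$ together with the edges $(y,z)$ and $(z,x)$. Any Hamilton cycle of $G'$ must use both edges at $z$, and removing $z$ turns it into a Hamilton $x$--$y$ path of $G$; so it suffices to find a Hamilton cycle in $G'$. The graph $G'$ has $n:=2m'+1$ vertices. One checks $G'$ is strongly connected: $G$ itself is strongly connected since $\delta^0(G)\ge(m'+1)/2>m'/2$ forces it (any proper nonempty $X$ with no out-edge would have $e(X,V\sm X)=0$, impossible once $|X^A|,|X^B|$ are balanced enough by the degree bound; the bipartite structure makes this routine), and $z$ is reached from $y$ and reaches $x$. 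For the degree-sum condition we use the pairwise version of Theorem~\ref{thm:ghouila-houri}: it is enough that $d^+_{G'}(u)+d^-_{G'}(v)\ge n$ for every pair of distinct non-adjacent vertices $u,v$. If neither $u$ nor $v$ equals $z$ then $u\in A$, $v\in A$ or $u\in B$, $v\in B$ (non-adjacency together with bipartiteness), whence $d^+_{G'}(u)+d^-_{G'}(v)\ge d^+_G(u)+d^-_G(v)\ge 2\cdot\frac{m'+1}{2}=m'+1$; but non-adjacent same-side vertices in a \emph{balanced} bipartite digraph with this degree bound actually have a common neighbour on the other side, and iterating one sees the relevant pairs are in fact adjacent via $z$ or handled directly — here I would insert the short computation showing the only genuinely non-adjacent pairs left are of the form $(z,v)$ with $v\notin\{x\}$ or $(u,z)$ with $u\notin\{y\}$, for which $d^+_{G'}(z)+d^-_{G'}(v)=1+d^-_G(v)$ and $d^+_{G'}(u)+d^-_{G'}(z)=d^+_G(u)+1$, which must be boosted. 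To boost these, note $v\in B$ (since $z$'s only out-neighbour $x$ is in $A$, a non-neighbour $v$ of $z$ with $v$ on the ``$A$-side role'' forces $v\in B$), and then we instead pair $v$ with a neighbour; the cleanest resolution is to note $z$ has exactly one out- and one in-edge, so in fact we do not need the pairwise bound at $z$ at all: a Hamilton cycle of $G'$ is equivalent to a Hamilton $x$--$y$ path of $G$, and such a path exists iff $G$ is ``bipartite-Hamilton-connected'' between $x$ and $y$, which by the bipartite analogue of Ghouila-Houri (apply Theorem~\ref{thm:ghouila-houri} to the $(2m'+1)$-vertex digraph $G''$ obtained by instead splitting nothing but adding $z$ \emph{on the $B$-side} with edges $(z,x)$ only and $(y',z)$ for a new $y'$ on the $A$-side... ) — at this point I would commit to one of these two closures and push the arithmetic through. \end{proof}

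I expect the genuine obstacle to be this bookkeeping of the added vertex's degree against the global count $2m'+1$; everything else (strong connectivity, the correspondence between the closing cycle and the path) is routine.
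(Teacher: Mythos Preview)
Your approach has a genuine gap that cannot be repaired along the lines you sketch. The add-a-vertex closure $G'$ has $n=2m'+1$ vertices, so Ghouila--Houri (or Meyniel) asks for degree sums of order $2m'$. But \emph{every} vertex of $G$, not just the added $z$, has in- and out-degree at most $m'$ (its neighbours lie in the opposite side of the bipartition), so $d^+_{G'}(u)+d^-_{G'}(v)\le 2m'<2m'+1$ for a typical pair; your own computation $d^+_{G}(u)+d^-_{G}(v)\ge m'+1$ for same-side $u,v$ is off from the target $2m'+1$ by roughly $m'$. No amount of bookkeeping at $z$ fixes this: the obstruction is global, coming from the fact that you are trying to apply an $n$-vertex condition to a bipartite digraph whose semidegrees are capped at $n/2$. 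The same issue kills any variant that enlarges $V(G)$.

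The paper's proof goes the opposite direction: it \emph{shrinks} the problem to $m'$ vertices so that the available degree $(m'+1)/2$ matches Ghouila--Houri's threshold. Concretely, one first finds (via Hall's theorem) a perfect matching in the auxiliary undirected bipartite graph on $(A\setminus\{x\})\cup(B\setminus\{y\})$ whose edges record the directed edges from $B$ to $A$ in $G$; this gives pairs $(v_i,w_i)$ with $v_i\in B$, $w_i\in A$, and one sets $w_{m'}=x$, $v_{m'}=y$. Then form a digraph $H$ on $m'$ vertices $z_1,\dots,z_{m'}$ with $(z_i,z_j)\in E(H)$ iff $(w_i,v_j)\in E(G)$. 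Now $\delta^+(H)+\delta^-(H)\ge m'$, Theorem~\ref{thm:ghouila-houri} gives a Hamilton cycle in $H$, and unrolling it through the matching edges yields the desired Hamilton $x$--$y$ path in $G$. The matching-contraction step is the idea you are missing.
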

\begin{proof}[Proof of claim]
Let $A'=A\sm\{x\}$ and $B'=B\sm\{y\}$, and let $G'$ be the (undirected) bipartite graph with vertex set $V'=A'\cup B'$ and edge set $E'=\{ab : (b,a)\in E(G)\}$. 

We claim that $G'$ contains a perfect matching. Note that $d_{G'}(a)\ge d^-_G(a)-1\ge (m'-1)/2$ for all $a\in A'$ and $d_{G'}(b)\ge d^+_G(b)-1\ge (m'-1)/2$ for all $b\in B'$. Let now $X\se A'$ be non-empty and assume that $|N_{G'}(X)|<|X|$. Since every vertex in $X$ has at least $(m'-1)/2$ neighbours in $G'$ it follows that $|X|>(m'-1)/2$. Moreover, the set $B'\sm N_{G'}(X)$ is non-empty, so for any vertex $v\in B'\sm N_{G'}(X)$ we have $N_{G'}(v)\subseteq A'\sm X$. This, however, implies that $d_{G'}(v) \le |A'\sm X|<(m'-1)/2$, a contradiction. Thus, $|N_{G'}(X)|\ge |X|$ for all $X\se A'$, which implies that $G'$ contains a perfect matching, by Hall's Theorem. 

Let $\{(v_1,w_1),...,(v_{m'-1},w_{m'-1})\}$ denote the corresponding matching of directed edges in $G$ such that $v_i\in B'$ and $w_i\in A'$ for all $1\le i \le m'-1$, and let $w_{m'} = x$ and $v_{m'}=y$. 
Consider now the following auxiliary digraph $H$ on vertex set $V(H)=\{z_1,...,z_{m'}\}$. For each pair $(i,j)$ let $(z_i,z_j)$ be an edge of $H$ if $(w_i,v_j)$ is an edge of $G$. Note that $H$ satisfies $\delta^0(H)\ge\delta^0(G)-1\ge m'/2$. Therefore, $H$ contains a Hamilton cycle, say with edge set $C$, by Theorem~\ref{thm:ghouila-houri}. Now, this Hamilton cycle corresponds to a Hamilton path from $x$ to $y$ in $G$ which can be obtained by replacing each edge $(z_i,z_j)$ in $C$ by the edges $(w_i,v_j)$ and $(v_j,w_j)$ (the latter only if $j\neq m'$) in $G$. 
\end{proof}
Clearly this implies that every $F[W_i]$ has a Hamilton $s_i$-$t_i$-path in the case when $s_i\in W_i^A$ and $t_i\in W_i^B$, or vice versa. Assume now that both $s_i$ and $t_i$ are on the same side of the bipartition, say without loss of generality in $W_i^A$. In that case $|W_i^A|=|W_i^B|+1$ by \ref{prop:lem3.3}. The balanced bipartite digraph $F[(W_i^A\cup W_i^B) \sm\{s_i\}]$ satisfies the assumptions of the claim and thus contains a Hamilton path from $u$ to $t_i$ for any out-neighbour $u$ of $s_i$. Adding the edge $(s_i,u)$ to that path yields a Hamilton path from $s_i$ to $t_i$ in $F[W_i]$, as required. 
\end{proof}

\begin{proof}[Proof of Lemma~\ref{lem:pathstoLONGcycle}]
Let $(A,B)$ be a bipartition of $F$ such that $|A|=|B|=n'$. Similarly to the proof of Lemma~\ref{lem:pathstohamcycle} we choose a partition $W_1\dot\cup\ldots\dot\cup W_a$ of $A\cup B$ uniformly at random from all partitions that satisfy 
\begin{enumerate}[(a)]
\item $s_i,t_i\in W_i$ for all $i$,
\item $\left| |W_i|-|W_j|\right| \le 1$ for all $i,j$,
\item $|W_i^A| = |W_i^B|.$ % \label{prop:lem3.3}
\end{enumerate} 
Analogously to~\eqref{aux:653} we deduce that with probability $1-o(1)$ 
\begin{equation}\label{aux:654}
d^\pm(v,W_i)\geq (c'-\eps)\frac{n'}{a} > \frac{m'}{4} \text{ for all } v\in V(F),\; i\in[a], 
\end{equation}
where $m'=|W_i^A|.$ 
Fix a partition such that~\eqref{aux:654} is satisfied. We now find an $s_i$-$t_i$-path in $F[W_i]$ using the following. 
\begin{claim}
Let $G$ be a balanced bipartite oriented graph on $2m'$ vertices. Assume that the minimum semidegree of $G$ satisfies $\delta^{0}(G)> m'/4$. Then $G$ is strongly connected.
\end{claim}
\begin{proof}
Let $v$ be an arbitrary vertex in $G$ and let $R^+(v)$ be the set of vertices $w$  such that there is a $v$-$w$-path in $G$. We first show that $|R^+(v)| > m'$. 

Suppose not. Let $G'=G[R^+(v)]$. Then $\delta^+(G')> m'/4$ as all out-neighbours of all $w\in R^+$ are elements of $R^+(v)$, by definition. 
Since $G$ is bipartite, so is $G'$. Let $A\cup B$ be some bipartition of $G'$. By the minimum degree assumption, the set $E(A,B)$ has size greater than $|A|m'/4$, and so there is a vertex $b$ in $B$ of in-degree greater than $|A|m'/4|B|$. As the in-  and out-neighbours of $b$ are distinct elements of $A$ (since $G'$ is an oriented graph) we obtain that 
$$ |A| > \frac{m'}{4}\left(\frac{|A|}{|B|}+1\right).$$
Counting the edges in $E(B,A)$ gives analogously that 
$$ |B| > \frac{m'}{4}\left(\frac{|B|}{|A|}+1\right).$$
Combining the two inequalities implies that 
$$|R^+(v)|= |A|+|B| > \frac{m'}{4}\left(\frac{|A|}{|B|}+\frac{|B|}{|A|}+2\right)\ge m',$$
where the last step follows from the AM-GM inequality. 

Analogously one can show that the set $R^-(v)$ of vertices $w$ such that there is a $w$-$v$-path in $G$ has size greater than $m'$. 
Since this is true for any $v\in V(G)$, it follows that for any two vertices $v$ and $v'$ of $G$, the sets $R^+(v)$ and $R^-(v')$ intersect, that is, there is a path from $v$ to $v'$.  
\end{proof}
This finishes the proof of the lemma since all graphs $F[W_i]$ are balanced bipartite oriented graphs and satisfy the degree condition~\eqref{aux:654}. 
\end{proof}

\section{Conclusion}\label{sec:outro}

In this paper we prove that for every $c>1/2$ every $cn$-regular bipartite digraph on $2n$ vertices admits an almost decomposition of its edge set into Hamilton cycles, as long as $n$ is large enough. We also prove that for every $c>1/4$ every $cn$-regular bipartite oriented graph on $2n$ vertices admits an almost decomposition of its edge set into nearly Hamilton cycles, as long as $n$ is large enough. 
This gives a first approximate version of Conjecture~\ref{conj:jackson}. The following two would each constitute a strengthening towards Conjecture~\ref{conj:jackson}. 
\begin{conjecture}\label{conj:jackson-approx}
Let $c>1/2$ and let $n$ be sufficiently large. Then every $cn$-regular bipartite digraph $G$ on $2n$ vertices has a Hamilton cycle decomposition.
\end{conjecture}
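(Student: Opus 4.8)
The plan is to abandon the lossy path-cover strategy used for Theorem~\ref{thm:main} and instead exploit regularity via the robust-expander machinery behind the proofs of Kelly's conjecture \cite{KUHN201362} and of the $1$-factorization and Hamilton decomposition conjectures \cite{csaba}. Recall that a digraph $D$ on $N$ vertices is a \emph{robust $(\nu,\tau)$-outexpander} if for every $S\se V(D)$ with $\tau N\le|S|\le(1-\tau)N$ the set $RN^+_\nu(S):=\{v:|N^-(v)\cap S|\ge\nu N\}$ has size at least $|S|+\nu N$; there is a \emph{bipartite} analogue, relative to a fixed bipartition $(A,B)$ with $|A|=|B|=n$, in which one tests only $S\se A$ and $S\se B$, measures $RN^+_\nu(S)$ inside the opposite part, and uses thresholds $\nu n$ and target $|S|+\nu n$. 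The proof would rest on two inputs: (a) every regular bipartite robustly expanding digraph of linear minimum semidegree decomposes into Hamilton cycles -- the bipartite digraph analogue of the robust-expander decomposition theorem of \cite{KUHN201362}, which should be obtainable from the methods of \cite{KUHN201362,csaba}; and (b) every $cn$-regular bipartite digraph with $c>1/2$ is a bipartite robust outexpander. Granting these, Conjecture~\ref{conj:jackson-approx} follows at once: $G$ is regular with $\delta^0(G)=cn>n/2$ by hypothesis, it robustly expands by (b), so (a) applies.

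Step (b) is the routine part. Fix $S\se A$ with $\tau n\le|S|=\beta n\le(1-\tau)n$ (the case $S\se B$ is symmetric). If $\beta\ge1-c+\nu$ then inclusion--exclusion gives $|N^-(b)\cap S|\ge d^-(b)+|S|-n=(c+\beta-1)n\ge\nu n$ for every $b\in B$, so $RN^+_\nu(S)=B$ and $n\ge|S|+\nu n$ as $\nu\le\tau$. If $\beta<1-c+\nu$, double-count the $cn|S|$ edges from $S$ to $B$: with $Q=RN^+_\nu(S)$, each vertex of $B$ receiving at most $|S|$ such edges and each vertex outside $Q$ fewer than $\nu n$, one gets $cn|S|\le|Q|\cdot|S|+(n-|Q|)\cdot\nu n$, hence $|Q|\ge(c\beta-\nu)n/(\beta-\nu)$, and the latter is at least $|S|+\nu n$ exactly when $\beta(c-\beta)\ge\nu(1-\nu)$. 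Since $\beta\mapsto\beta(c-\beta)$ is concave, on $[\tau,\,1-c+\nu)$ it is minimised at an endpoint, where it equals $\tau(c-\tau)$ or $(1-c+\nu)(2c-1-\nu)$; both are bounded away from $0$ -- the first since $\tau<c$, the second since $c>1/2$ -- so both exceed $\nu(1-\nu)$ once $0<\nu\ll\tau\ll c-\tfrac12$. Note that at $c=\tfrac12$ the second case breaks exactly in the window $\beta\in[\tfrac12,\tfrac12+\nu)$, which is the structural reason the method halts at $c>\tfrac12$.

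For step (a), if the bipartite robust-expander decomposition theorem is not available verbatim, one would assemble it from \cite{KUHN201362,csaba}: iteratively remove Hamilton cycles from $G$ while preserving regularity and -- by the standard stability of robust outexpansion under deletion of sparse regular subdigraphs -- bipartite robust outexpansion, down to a sparse regular robustly expanding remainder, and then decompose that remainder exactly by an absorbing argument in the spirit of \cite{KUHN201362}. Ng's Hamilton decomposition of $D_{n,n}$ \cite{NG1997279} can act as the base case or as a reservoir of absorbing structure, just as $D_{b,b}$ does in the proof of Lemma~\ref{lem:pathcover}.

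The main obstacle is step (a): porting the long and delicate Kühn--Osthus decomposition argument to bipartite digraphs, and in particular tracking the extra balance/parity bookkeeping forced by the bipartition -- an alternating path of prescribed length ends in a prescribed part, so every connecting gadget, absorber and leftover piece must be kept balanced, precisely the issue already visible in Lemma~\ref{lem:pathstohamcycle}. A secondary, conceptual obstacle is that any expander-based argument is intrinsically non-extremal and cannot be pushed to $c=\tfrac12$; a full proof of Jackson's conjecture would also have to dispose of those diregular bipartite tournaments that fail robust outexpansion by an entirely different method.
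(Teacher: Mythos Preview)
The statement you are attempting is \emph{not} proved in the paper: Conjecture~\ref{conj:jackson-approx} is stated in Section~\ref{sec:outro} as an open problem, one of two possible strengthenings of Theorem~\ref{thm:main} towards Jackson's conjecture. There is therefore no proof in the paper to compare your proposal against.

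Your outline is a plausible research programme, and you are candid that it is one: you explicitly flag step~(a) --- a bipartite analogue of the K\"uhn--Osthus robust-outexpander decomposition theorem --- as the main obstacle, and you do not claim to have it. That honesty is appropriate, because step~(a) is exactly the missing ingredient. The paper itself remarks in Section~\ref{sec:outro} that ``the approach via robust outexpanders does not cover the bipartite nor the tripartite case,'' which is precisely why Conjecture~\ref{conj:jackson-approx} is posed rather than proved. Your step~(b), verifying bipartite robust outexpansion when $c>1/2$, is indeed routine and your argument for it is essentially correct; but without (a) this buys nothing.

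In short: there is no genuine error in your proposal, but there is also no proof --- what you have written is a sketch of how one \emph{might} attack the conjecture, contingent on establishing a substantial new decomposition result that neither you nor the paper supplies. As a proof of Conjecture~\ref{conj:jackson-approx} it is incomplete at exactly the point you yourself identify.
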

Note that this is a bipartite analogue of~\cite[Theorem 1.4]{KUHN201362}: a digraph on $n$ vertices with minimum semidegree $cn$ for $c>1/2$ has a Hamilton decomposition, provided that $n \geq n_0(c)$.
\begin{conjecture}\label{conj:jackson-approx2}
Let $\eps>0$, let $n$ be sufficiently large, and let $d > n/4$ be an integer. Then every $d$-regular bipartite oriented graph on $2n$ vertices contains at least $(1-\eps)dn$ edge-disjoint Hamilton cycles. 
\end{conjecture}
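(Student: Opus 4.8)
The plan is to settle the borderline case $c=1/2$ of Theorem~\ref{thm:main}: a diregular bipartite tournament $T$ on $2n$ vertices is precisely $(n/2)$-regular, which that theorem just misses. One cannot simply rerun its proof with $c=1/2$, because property~\ref{PsemiDegInWis} of Lemma~\ref{lem:partition} and the connecting step of Lemma~\ref{lem:pathstohamcycle} (through Claim~\ref{claim:HamPath}) both require the reservoirs $W_i$ to have minimum semidegree strictly above $|W_i|/4$, whereas any quasirandom reservoir extracted from $T$ has semidegree $(1\pm o(1))|W_i|/4$ --- exactly the Ghouila--Houri threshold for bipartite digraphs. So the connecting step must use structure rather than degrees, and the natural tool is robust expansion.

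Step one is a dichotomy. Fix constants $0<\nu\ll\tau\ll\eps$. Either (a) $T$ is a \emph{bipartite robust $(\nu,\tau)$-outexpander} --- for every $S\subseteq A$ with $\tau n\le|S|\le(1-\tau)n$ the robust out-neighbourhood $RN^+_\nu(S):=\{b\in B:d^-(b,S)\ge\nu n\}$ has size at least $|S|+\nu n$, and symmetrically with $A,B$ exchanged --- or (b) $T$ is $O(\nu/\tau)$-close to the \emph{directed $4$-cycle blow-up} $T^{\ast}$: there is a partition $A=S\cupdot S'$, $B=B_1\cupdot B_2$ into four sets of size $(1\pm O(\nu/\tau))n/2$ such that, with $O(\nu n^2/\tau)$ exceptions, all edges follow the cyclic pattern $S\to B_1\to S'\to B_2\to S$. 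Case~(b) is the only obstruction to~(a): if~(a) fails for some $S\subseteq A$, then, since every in- and out-degree of $T$ equals $n/2$, a small $RN^+_\nu(S)$ forces almost all vertices of $B\setminus RN^+_\nu(S)$ to direct nearly all of their $n/2$ out-edges into $S$; double counting this against $\sum_{a\in S}d^-(a)=|S|n/2$ first pins $|S|$ to $(1\pm O(\nu/\tau))n/2$, and this then cascades around all four sets to force the $T^{\ast}$-structure.

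In the extremal case~(b) one near-decomposes $T$ directly. Each of the four blocks is, up to $O(\nu n^2/\tau)$ missing or reversed edges, a complete bipartite digraph oriented entirely one way, so it contains $(1-o(1))n/2$ edge-disjoint near-perfect matchings. One stitches one matching from each block into a Hamilton cycle by choosing the four matching families so that, for each index $j$, the composed bijection along $S\to B_1\to S'\to B_2\to S$ is a single $(n/2)$-cycle; in a cyclic model of the blocks this is a coprimality condition on a sum of four shifts, satisfiable for all but $O(\eps n)$ of the indices (which are discarded), while the exceptional edges and the $o(n)$ vertices left uncovered per layer are absorbed at the cost of a negligible fraction of cycles. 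As $\nu\ll\eps$, this yields $(1-\eps)n/2\ge(1/2-\eps)n$ edge-disjoint Hamilton cycles.

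In the expander case~(a) one reruns the skeleton of the proof of Theorem~\ref{thm:main}, but enlarges each reservoir $W_i$ to linear size $\eps n$ (overlapping across the $(\log n)^3$ colours). Then the $U_i$ still contain a $(1-\eps)$-fraction of $T$, so Lemma~\ref{lem:pathcover} still supplies the required edge-disjoint path covers of $H_i[U_i]$; and because $\nu\ll\tau\ll\eps$, each $T[W_i]$ --- and each $H_i[W_i]$ obtained from it --- is still a bipartite robust outexpander with constant parameters, and remains one after the $o(|W_i|)$ edges per vertex used by the previously built Hamilton cycles are deleted. The connecting step inside $W_i$ then invokes robust expansion in place of Lemma~\ref{lem:pathstohamcycle}: \emph{a bipartite robust outexpander of linear degree contains a spanning system of vertex-disjoint paths realizing any prescribed balanced collection of $o(n/\log n)$ endpoint pairs.} Proving this bipartite directed ``connecting lemma'', and carrying out the book-keeping that keeps the resulting Hamilton cycles edge-disjoint across the overlapping reservoirs, is the main obstacle; it is the bipartite directed counterpart of the absorbing and rotation--extension technology of K\"uhn and Osthus for robust expanders, whose undirected and non-bipartite directed versions are available but would have to be rebuilt in this setting. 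Granting it, each of the $(\log n)^3$ colours contributes $(1-\eps)$ times its degree in edge-disjoint Hamilton cycles, for a total of $(1-\eps)n/2$, which finishes case~(a).
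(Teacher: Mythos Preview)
This statement is not proved in the paper; it is posed in Section~\ref{sec:outro} as an open conjecture, one of two suggested strengthenings of Theorem~\ref{thm:main} towards Jackson's conjecture. There is therefore no proof in the paper against which to compare your attempt.

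Your outline is a plausible stability-plus-expansion strategy, but it is not a proof, and you say so yourself. In the expander case you need a bipartite directed analogue of the K\"uhn--Osthus connecting machinery --- that a bipartite robust outexpander of linear semidegree admits a spanning linkage for any balanced set of $o(n/\log n)$ prescribed endpoint pairs, and remains one after iterated edge removal --- and you write ``Granting it''. That is precisely the missing ingredient; the paper itself remarks in Section~\ref{sec:outro} that the robust-outexpander approach of~\cite{KUHN201362} does not cover the bipartite case, so nothing here can be quoted off the shelf. Your overlapping linear-size reservoirs are a second unresolved issue: with $(\log n)^3$ reservoirs each of size $\eps n$ they cannot be disjoint, so edges between $U_i$ and $W_i$ may already have been consumed by another colour's Hamilton cycles, and the degree estimates you rely on for the greedy connecting step no longer hold without further argument. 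Finally, in the extremal case the assertion that exceptional vertices ``are absorbed at the cost of a negligible fraction of cycles'' is hand-waving: a vertex whose edges point the wrong way around the $4$-cycle blow-up cannot lie on any Hamilton cycle that respects the cyclic orientation, so incorporating it requires an explicit absorber construction, not merely a counting loss.
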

The condition $d > n/4$ would be best possible since the oriented graph may be disconnected otherwise. Furthermore, the assumption of being regular is necessary for such a statement. To see this consider, for example, a blow up of a $C_4$ with slightly uneven vertex classes. This oriented graph has minimum semidegree slightly below $n/2$, yet fails to be Hamiltonian.

A further direction for exploration may be multi-partite tournaments. Let a {\em regular $r$-partite tournament} be a regular orientation of the complete $r$-partite graph $K(n;r)$ with equal size vertex classes. In~\cite{KUHN201362}, K\"uhn and Osthus not only prove Kelly's conjecture, but more generally, that every sufficiently large regular digraph $G$ on $n$ vertices whose degree is linear in $n$ and which is a {\em robust outexpander} contains a Hamilton cycle decomposition. In~\cite[Section 1.6]{ko2014} they then argue that, for $r\ge 4$, every sufficiently large $r$-partite tournament is a robust outexpander, and thus, has a Hamilton cycle decomposition. 
The approach via robust outexpanders does not cover the bipartite nor the tripartite case. Yet it is conjectured in~\cite{ko2014}, additionally to Jackson's conjecture, that every regular tripartite tournament has a Hamilton cycle decomposition. 

A possible approximate version of the conjecture for tripartite tournaments could be the following. 
\begin{conjecture}\label{tripartite-approx}
Let $\eps >0$, $c>1$ and let $n$ be sufficiently large. 
Let $G$ be a $cn$-regular tripartite digraph with vertex classes each of size $n$. Then $G$ contains at least $(1-\eps)cn$ edge-disjoint Hamilton cycles. 
\end{conjecture}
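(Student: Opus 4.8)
The plan would be to transfer, to the tripartite setting, the three-step strategy used above for Theorem~\ref{thm:main} (itself an adaptation of Ferber, Long and Sudakov~\cite{ferber2018}). Write $K=\log n$ and $d=cn$. The first step is a tripartite analogue of Lemma~\ref{lem:partition}: split $G$ into $K^3$ edge-disjoint spanning subdigraphs $H_1,\dots,H_{K^3}$, each equipped with a partition $V(G)=U_i\cup W_i$ in which $W_i$ meets every one of the three vertex classes in $n/K^2\pm 1$ vertices, such that $H_i[U_i]$ has all semidegrees equal to $r\pm r^{3/5}$ for a common $r=(1\pm\eps)d/K^3$, every $u\in U_i$ has at least $\eps|W_i|/16K$ out- and in-neighbours inside $W_i$, and $H_i[W_i]$ has minimum semidegree at least $(c-\eps)|W_i|/3$. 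As in the appendix this is a routine random-partition argument through Remark~\ref{rem:hypergeom}; the only bookkeeping change is that the ``$/2$'' of the bipartite version becomes ``$/3$'' because the host digraph now has three classes, so in particular $\delta^0(H_i[W_i])\ge(c-\eps)n'$ with $n'=n/K^2$ and $c-\eps>1$.

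The second step is a tripartite analogue of Lemma~\ref{lem:pathcover}: each $H_i[U_i]$ should contain at least $r-o(d/K^3)$ edge-disjoint path covers, each of size $O(n/\log^4 n)$, whose union has minimum semidegree $r-o(r)$. Following the proof of Lemma~\ref{lem:pathcover} one would partition each class of $U_i$ into $b=2\log^4 m$ random parts, take a near-decomposition of the complete tripartite digraph $D_{b,b,b}$ into Hamilton paths in which no vertex is the endpoint of more than $2\sqrt{\log b}$ of them (a tripartite version of Lemma~\ref{lem:completebipartite}, obtainable from Hamilton-decomposition results for complete multipartite (di)graphs such as~\cite{hetyei,la1976,NG1997279}), follow each such Hamilton path, and within each block --- a bipartite graph between a part of one class and a part of another --- extract many edge-disjoint near-perfect matchings via Lemma~\ref{lem:lem23} and concatenate them.

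The third step is a tripartite analogue of Lemma~\ref{lem:pathstohamcycle}: if $F$ is a balanced tripartite digraph with classes of size $n'$, $\delta^0(F)\ge c'n'$ for some $c'>1$, and $s_1,t_1,\dots,s_a,t_a$ are prescribed distinct vertices meeting each class equally often with $a\ll n'/\log n'$, then $F$ has a path cover whose $i$th path runs from $s_i$ to $t_i$. As before one randomly splits each class into $a$ parts, one per prescribed path and containing the corresponding $s_i,t_i$, so that each part becomes a near-regular tripartite digraph of the right size, and then invokes a tripartite analogue of Claim~\ref{claim:HamPath} to find the required Hamilton path in each part. With these three lemmas in hand the assembly is verbatim as in the proof of Theorem~\ref{thm:main}: close up the paths of the $k$th path cover of $H_i[U_i]$ through $W_i$ using greedily chosen connector vertices, apply the tripartite closing lemma to $H_i[W_i]$ with the edges of the previous cycles deleted, and run the $K^3$ blocks in parallel to obtain $(1-O(\eps))cn$ edge-disjoint Hamilton cycles.

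The reason this remains a conjecture is that ``$cn$-regular'' imposes no balance among the three ordered pairs of classes: a vertex of $V_1$ may send all $cn$ of its out-edges into $V_2$ and none into $V_3$. The hypothesis $c>1$ only forces $d^+(v,V_2)+d^+(v,V_3)=cn>n$ for each $v\in V_1$, so every vertex has more than $n/2$ out-neighbours in \emph{at least one} of the other two classes, but that favoured class may vary from vertex to vertex. This breaks the second step head-on: after refining the classes into $b$ parts, a block between a part of $V_1$ and a part of $V_3$ will typically contain vertices of essentially zero degree, so Lemma~\ref{lem:lem23} does not apply and one cannot route path covers along a fixed Hamilton path of $D_{b,b,b}$. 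It also complicates the third step, where Claim~\ref{claim:HamPath} must be replaced by a Hamiltonicity statement, with prescribed endpoints, for near-regular tripartite digraphs carrying no pairwise-balance hypothesis; such a statement is plausible in the spirit of known results on regular multipartite tournaments, but the near-regular, prescribed-endpoint version is itself delicate. The likeliest way past the second-step obstruction is to \emph{not} fix the cyclic pattern of classes along the Hamilton cycles in advance: instead of following a single Hamilton path of $D_{b,b,b}$, let each path cover decide locally which class to advance to next, using the $c>1$ slack to guarantee that a ``heavy'' block is always available --- making this compatible with edge-disjointness across $\approx cn$ path covers is where the real work lies.
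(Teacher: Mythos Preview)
This statement is posed in the paper as a conjecture and is \emph{not} proved there; the authors explicitly leave it open. Your write-up is accordingly not a proof but an outline of how the bipartite three-step strategy would be transferred, together with a diagnosis of where it breaks down --- which is the right response here.

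The paper's own discussion of the obstruction is short: it singles out Claim~\ref{claim:HamPath} as the piece that ``does not seem to easily transfer'', on the grounds that a minimum-semidegree lower bound of roughly $n$ for a balanced tripartite digraph on $3n$ vertices does not guarantee Hamiltonicity. You flag this same issue in your third step, but you place more emphasis on an earlier obstruction the paper does not mention: $cn$-regularity imposes no balance between the two out-directions from a given class, so the block-matching extraction via Lemma~\ref{lem:lem23} along a fixed Hamilton path of $D_{b,b,b}$ can fail. (One small correction: since $c>1$ and each class has size $n$, a vertex of $V_1$ cannot send \emph{all} $cn$ out-edges into a single class; it must send at least $(c-1)n$ into each. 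But for $c$ near $1$ this is far from the near-regularity that Lemma~\ref{lem:lem23} requires, so your point stands.) Your proposed remedy --- abandoning a fixed cyclic pattern and letting each path cover route adaptively using the $c>1$ slack --- is reasonable, and you correctly identify that reconciling this with edge-disjointness across roughly $cn$ covers is where the real difficulty lies.

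In short: there is no proof in the paper to compare against; you correctly recognise the statement as open; and your analysis of the obstacles is somewhat more detailed than the paper's one-line remark, locating difficulties at both the path-cover step and the closing-up step rather than only the latter.
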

Parts of our arguments do work for such an approximate version. The equivalent of Claim~\ref{claim:HamPath}, however, does not seem to easily transfer. In fact, assuming just a lower bound of roughly $n$ on the minimum semidegree of a balanced tripartite digraph on $3n$ vertices does not necessarily imply that the graph is Hamiltonian.

\bigskip 
\noindent
{\bf Acknowledgement.}
The authors would like to thank Asaf Ferber for helpful discussions during the early stages of this project. Furthermore, we would like to thank the referee for many helpful comments that improved the paper. In particular we are grateful for suggesting to add a version of Theorem 1.4. 
 
\bibliographystyle{abbrv}
\bibliography{ref}

\appendix
\section{Proof of \autoref{lem:partition}}\hypertarget{appendix}{}
Select at random $K$ equipartitions of $A$ and $K$ equipartitions of $B$, each into $K^2$ sets: for each $i\in [K]$ let $\{S_{i,k}^A\}_{k=1}^{K^2}$ be the $i^{\mathrm{th}}$ partition of $A$ and let $\{S_{i,k}^B\}_{k=1}^{K^2}$ be the $i^{\mathrm{th}}$ partition of $B$. Note that all parts of all partitions have size either $\lfloor n/K^2\rfloor$ or $\lceil n/K^2\rceil$, and for each index $i$ and each vertex $v\in A$ (respectively $B$) there exists a unique index $k(i,v)$ such that $v\in S_{i,k(i,v)}^A$ (respectively $S_{i,k(i,v)}^B$). Denote by $S_{i,k}$ the union of $S_{i,k}^A$ and $S_{i,k}^B$.

Consider the following random sets. 
For $v\in V(D)$, $i\in  [K]$, let $X^\pm (v,i)$ 
be the set of vertices $u\in N_D^\pm(v)\cap S_{i,k(i,v)}$ such that $u,v\in S_{j,\ell}$ for some $j\neq i$ and some $\ell$. 
Further, let $Y^\pm(v)$ be the set of vertices $w\in N^\pm _D(v)$ such that both $v$ and $w$ are in the same set $S_{i,k}$ for some $i,k$.  In other words, if we colour the edges of all induced subgraphs $D[S_{i,k}]$ in colour $i$ (allowing multiple colours), $X^\pm(v,i)$ is the set of all vertices $w$ such that the edge $(v,w)$ (or $(w,v)$, respectively) received colour $i$ and at least one other colour, and $Y^\pm(v)$ is the set of vertices $w$ such that the edge $(v,w)$ (or $(w,v)$, respectively) received at least one colour.  Set $s=n/K^2$ and  $b=\E(|Y^\pm(v)|)$ where we note that $b$ is independent of $v$ since all degrees in $D$ are equal and since the partitions were chosen uniformly. 
We claim that all of the following properties hold with high probability: 
\begin{enumerate}[label={(\alph*)}]

\item For all $v\in V(D)$ and all sets $S_{i,k}$: 
	$d^\pm_D (v,S_{i,k}) =\frac{d |S_{i,k}|}{2n}\pm 2\sqrt{s\log n}$;
	\label{eq:lem1}

\item for all $v\in V(D)$ and $i\in [K]$:
	$|X^\pm(v,i)| =o(s);$ \label{eq:lem4}
	
\item for all $v\in V(D)$, $|Y^\pm(v)|=b\pm2\sqrt{K^2s\log n}.$
	\label{eq:lem5}
\end{enumerate}

For Property~\ref{eq:lem1} note that for fixed $v\in V(D)$, $i\in[K]$, and $k\in [K^2]$, both 
$d^+_D (v,S_{i,k})$ and $d^-_D (v,S_{i,k})$ are hypergeometric random variables, each with parameters $(n,d,|S_{i,k}|/2)$. Hence, it follows that~\ref{eq:lem1} holds with probability at least 
$1-16nK^3e^{-4\log n/3}=1-o(1)$, by Remark~\ref{rem:hypergeom} and the union bound. 

For fixed $v\in V(D)$ and $i\in [K]$, the random variable $|X^\pm(v,i)|$ is dominated by a binomial random variable with parameters $(nK,(1/K^2)^2)$. Thus $\E(|X^\pm(v,i)|)\leq \left(\frac{1}{K^2}\right)^2nK=o(s)$ and \ref{eq:lem4} follows from a straightforward application of Chernoff's inequality (Lemma~\ref{lem:chernoff}). 

For Property~\ref{eq:lem5} fix a vertex $v\in A$ and note that 
$$|Y^\pm(v)| = \left| N_D^\pm (v) \cap \bigcup_{i=1}^{K} S_{i,k(i,v)}^B\right|. 
	$$
For every $i\in [K]$ and every $w\in B$, the probability that $w\in S_{i,k(i,v)}^B$ is $1/K^2$. Thus, the probability that such a vertex $w$ is in $\bigcup_{i=1}^{K} S_{i,k(i,v)}^B$ is 
$p'= 1-(1-1/K^2)^K$. It follows that $b=\E(|Y^\pm(v)|)=dp'$. 
For each $i\in [K]$, let $U_i$ be a random subset of $B$, where every $w\in B$ is an element of $U_i$ with probability $1/K^2$, all choices being independent. Let $U =   \bigcup_{i=1}^{K} U_i$ and let $\mathcal{E}$ be the event that $|U_i| = |S_{i,k(i,v)}^B|$ for all $i$. Then the random variable $|N_D^\pm (v) \cap U|$ is binomially distributed with parameters $(d,p')$, and thus, $\E(|N_D^\pm (v) \cap U|) =b.$ Furthermore, the random variable $|Y^\pm(v)| $ has the same distribution as $|N_D^\pm (v) \cap U|$ conditioned on $\mathcal{E}$. Hence,  
\begin{equation}\label{eq:aux457}
\Pr\left(\left| |Y^\pm (v)| - b\right|>t \right) \le \Pr\left( \left| |N_D^\pm (v) \cap U| - b \right| >t\right) /\Pr(\mathcal{E})
\end{equation}
for all $t$. 
Now, each $|U_i|$ has a binomial distribution with mean $s$, thus $\Pr(|U_i|=j)$ is maximised when $j=s$. Thus, by independence, 
$$ \Pr(\mathcal{E}) = \prod_{i=1}^K \Pr(|U_i|=s) \ge 1/(n+1)^{-K}.$$
Hence, we deduce from~\eqref{eq:aux457} that 
$$\Pr\left(\left| |Y^\pm (v)| - b\right|>t \right)  
\le 2e^{- t^2/3b} (n+1)^K,$$
by Chernoff's inequality (Lemma~\ref{lem:chernoff}). If $t=2\sqrt{n\log n}$ then the expression on the right hand side is of order $o(1/n)$, where we use that $b= dp'\sim cn/K$. 
The same inequality holds for all vertices $v\in B$, so~\ref{eq:lem5} follows by taking the union bound over all $v\in V(D)$. 

Now fix $K$ partitions  $\{S_{i,k}^A\}_{k=1}^{K^2}$ of $A$, and $K$ partitions $\{S_{i,k}^B\}_{k=1}^{K^2}$ of $B$, such that~\ref{eq:lem1},~\ref{eq:lem4} and~\ref{eq:lem5} are satisfied. 

Let $D'$ be the digraph consisting of all edges of $D$ which are not contained in any $D[S_{i,k}]$. It follows directly from ~\ref{eq:lem5} that 
\begin{equation}\label{eq:aux284}
d^\pm_{D'}(v)=d-b\pm2\sqrt{K^2 s\log n} 
\end{equation}
for every $v\in V(D)$.

Relabel the sets $\{S_{i,k}\}_{(i,k) \in [K] \times [K^2]}$ as $W_1,\ldots,W_{K^3}$ and define the digraphs $H_j$ on vertex sets $W_j$ to be the edges of $D[W_j]$ that are not in $D[W_{j'}]$ for any $j'\neq j$. Finally, let $U_i = V(D)\sm W_i$. 

Property $(P1)$ of the lemma statement is trivially satisfied by definition. 
Furthermore, for every $1\le i \le K^3$ and every $v\in W_i$ we have that 
$$d^{\pm}_{H_i} (v,W_i) = \frac{d |W_{i}|}{2n}\pm \left(2\sqrt{s\log n} +o(s)\right),$$
by \ref{eq:lem1} and \ref{eq:lem4}. Hence, Property $(P4)$ follows 
since $d=cn$ and $|W_{i}|= n/K^2.$ 

It remains to choose edge sets $E_{H_i}(U_i,W_i)$, $E_{H_i}(W_i,U_i)$ and $E_{H_i}(U_i)$ such that properties $(P2)$ and $(P3)$ are satisfied. 
For a vertex $u\in V(D)$, let $I_u$ denote the set of indices $i$ such that $u\in W_i$, and note that by construction $|I_u|=K$. Furthermore, for an edge $e=(u,v)\in D'$ we have $I_u\cap I_v =\emptyset$ by definition of $D'$. 
Define random edge sets $E_1,\ldots,E_{K^3}$ and $D_1,\ldots,D_{K^3}$ as follows. 
For every edge $e=(u,v)\in D'$, add $e$ to exactly one of $E_1,\ldots,E_{K^3}, D_1,\ldots,D_{K^3}$ with the following probabilities. For each $i\in [K^3]$ 
\begin{itemize}
\item add $e$ to $E_i$ with probability $\frac{\eps}{2K}$ if $i\in I_u\cup I_v$; 
\item add $e$ to $D_i$ with probability $\frac{1-\eps}{K^3-2K}$ if $i\not\in I_u\cup I_v$,  
\end{itemize}
choices being independent for distinct edges. Note that the probabilities indeed add up to 1. 
Now for all $i\in [K^3]$ and all $v\in U_i$, 
$$\E(d^\pm _{D_i}(v))=d^\pm_{D'}(v)\frac{1-\eps}{K^3-2K}$$
and 
$$\E(d^\pm_{E_j}(v,W_j))=d^\pm_D(v,W_j)\frac{\eps}{2K}.$$
Hence by \eqref{eq:aux284}, Chernoff's inequality (Lemma~\ref{lem:chernoff}) and the union bound,  with probability at least $1-8nK^3 e^{-\omega(\log n)}=1-o(1)$ we have that $d_{D_i}^\pm(v)=r\pm r^{3/5}$ for all $i\in[K^3]$ and all $v\in U_i$, for some suitable $r= (1\pm \eps)d/K^3$. 
Similarly we obtain that with probability at least $1-4nK^3e^{-\omega(\log n)}=1-o(1)$, we 
have that for all $i\in[K^3]$ and all $v\in U_i$, 
$$ d_{E_i}^{\pm}(v,W_i) \ge \frac{\eps}{2K} \left(\frac{d |W_i|}{2n} - 2\sqrt{n/\log n}\right)
\ge \eps c |W_i|/8 K,$$
by \ref{eq:lem1}, Chernoff's inequality~\ref{lem:chernoff}, the union bound, and where we use in the last inequality that $d= cn$ and $|W_i|\gg \sqrt{n \log n}$. 

Finally, fix choices of $E_i$ and $D_i$ that satisfy 
$d_{D_i}^\pm(v)=r\pm r^{3/5}$ 
and  $d_{E_i}^{\pm}(v,W_i) \ge \eps c |W_i|/8 K$ for all $i\in[K^3]$ and all $v\in U_i$, and set $H_i = E_i \cup D_i \cup H_i[W_i]$. 

\end{document}